\newtheorem{thm}{Theorem}[section]
\newtheorem{definition}{Definition}[section]
\newtheorem{lemma}{Lemma}[section]
\newtheorem{remark}{Remark}[section]
\numberwithin{equation}{section}
\definecolor{newcolor1}{rgb}{.8,.349,.1}
\colorlet{bblue}{blue!50!black}
\def\e{\mbox{\boldmath $e$}}
\def\f{\mbox{\boldmath $f$}}
\def\g{\mbox{\boldmath $g$}}
\def\m{\mbox{\boldmath $m$}}
\def\n{\mbox{\boldmath $n$}}
\def\x{\mbox{\boldmath $x$}}
\newcommand\dt {{\Delta t}}
\def\0{\mbox{\boldmath $0$}}
\def\um{\underline{\bm m}}
\def\ppsi{\mbox{\boldmath $\psi$}}
\begin{document}

\title[A third order accurate linear scheme for LLG Equation]{Error Analysis of Third-Order in Time and Fourth-Order Linear Finite Difference Scheme for Landau-Lifshitz-Gilbert Equation under Large Damping Parameters}





\author{Changjian Xie}
\address{School of Mathematics and Physics\\ Xi'an Jiaotong-Liverpool University\\Ren'ai Rd. 111, Suzhou, 215123\\ Jiangsu\\ China.}
\email{Changjian.Xie@xjtlu.edu.cn}

\author{Cheng Wang}
\address{Mathematics Department\\ University of Massachusetts\\ North Dartmouth\\ MA 02747\\ USA.}
\email{cwang1@umassd.edu}

\subjclass[2010]{35K61, 65N06, 65N12}

\date{\today}


\keywords{Landau-Lifshitz-Gilbert equation, backward differentiation formula, linear third-order scheme, large damping}

\begin{abstract}
This work proposes and analyzes a fully discrete numerical scheme for solving the Landau-Lifshitz-Gilbert (LLG) equation, which achieves fourth-order spatial accuracy and third-order temporal accuracy.Spatially, fourth-order accuracy is attained through the adoption of a long-stencil finite difference method, while boundary extrapolation is executed by leveraging a higher-order Taylor expansion to ensure consistency at domain boundaries. Temporally, the scheme is constructed based on the third-order backward differentiation formula (BDF3), with implicit discretization applied to the linear diffusion term for numerical stability and explicit extrapolation employed for nonlinear terms to balance computational efficiency. Notably, this numerical method inherently preserves the normalization constraint of the LLG equation, a key physical property of the system.Theoretical analysis confirms that the proposed scheme exhibits optimal convergence rates under the \(\ell^{\infty}([0,T],\ell^2)\) and \(\ell^2([0,T],H_h^1)\) norms. Finally, numerical experiments are conducted to validate the correctness of the theoretical convergence results, demonstrating good agreement between numerical observations and analytical conclusions.

\end{abstract}

\maketitle

\section{Introduction}

The Landau-Lifshitz-Gilbert (LLG) equation is given by 
\begin{align}\label{c1}
{\m}_t=-{\m}\times\Delta{\m}-\alpha{\m}\times({\m}\times\Delta{\m}) , 
\end{align}
with
\begin{equation}\label{boundary}
\frac{\partial{\m}}{\partial \boldmath {\nu}}\Big|_{\Gamma}=0,
\end{equation}
where $\Gamma = \partial \Omega$ and $\boldmath {\nu}$ is the unit outward normal vector along $\Gamma$. Here ${\m}\,:\,\Omega\subset\mathbb{R}^d\to S^2$ represents the magnetization vector field with $|{\m}|=1,\;\forall x\in\Omega$, $d=1,2,3$ is the spatial dimension, and $\alpha>0$ is the damping parameter. The first term on the right hand side of \cref{c1} is the gyromagnetic term, and the second term is the damping term. Compared to the original LLG equation \cite{Landau2015On}, \cref{c1} only includes the exchange term which poses the main difficulty in numerical analysis, as done in the literature \cite{weinan2001numerical, cimrak2004iterative, bartels2006convergence, gao2014optimal}. 
To ease the presentation, we set $\Omega=[0, 1]^d$, in which $d$ is the dimension. 

In the case of a large dampening parameter $\alpha$, so that the term of $-\alpha{\m}\times({\m}\times\Delta{\m})$ is more dominant, we make the following observation: 
\begin{align*} 
- {\m}\times({\m}\times\Delta{\m}) = \Delta \m + | \nabla \m |^2 \m ,  \quad 
\mbox{since $| \m | \equiv 1$} . 
\end{align*}
In turn, the LL equation~\eqref{c1} could be rewritten as 
\begin{align} \label{equation-LL-alt}
{\m}_t=-{\m}\times\Delta{\m} + \alpha \Delta \m + \alpha | \nabla \m |^2 \m . 
\end{align}
Again, the homogeneous Neumann boundary condition~\eqref{boundary} is imposed. 

We have proposed an effective method to handle the problem of large dissipation coefficients in previous work \cite{Cai2022JCP, Cai2023MMAS}, where we adopted the second-order Backward Differentiation Formula (BDF2) and treated the linear diffusion term $\alpha \Delta \m$ implicitly, while the two nonlinear terms, namely $-\m\times \Delta \m$ and $\alpha |\nabla \m|^2\m$ are discretized in a fully explicit way, where the high order term using the interpolation of pre-projection solution and low order nonlinear terms using the interpolation of post-projection solution. Subsequently, a point-wise projection is applied to the intermediate field, so that the numerical solution of $\m$ has a unit length at the point-wise level. For the approximation of $\partial_t \m$ using BDF2, we adopt the pre-projection solution for all steps. Such a numerical approach leads to a linear system with constant coefficients independent the updated magnetization at each time step. Based on this subtle fact, the linear numerical scheme has demonstrated great advantages in the simulation of ferromagnetic materials for large damping parameters. 

In this work, we aim to maximize the advantages of such efficiency and accuracy. Therefore, we propose a feasible numerical method based on the idea of third order BDF. We provide the convergence analysis and the optimal rate error estimate for the proposed linear numerical scheme, in the discrete $\ell^{\infty}([0,T];\ell^2)\cup\ell^2([0,T];H^1_h)$ norm, if the damping parameter is greater than $7$. The proof of stability estimate of the projection step is clear as previous work, which plays a crucial role in the rigorous error estimate for the original error function. 

The The rest of this paper is organized as follows. The fully discrete numerical schemes and state the main theoretical result of convergence are reviewed in \Cref{sec:main theory}. The detailed proof is provided in \Cref{Sec:proof}. Some numerical results are presented in \Cref{sec:experiments}. Finally, some concluding remarks are made in \Cref{sec:conclusions}.

\section{Main theoretical results}
\label{sec:main theory}

\subsection{Fully discretization}\label{discretisations}

The finite difference method is used to approximate \cref{c1} and \cref{boundary}. Denote the spatial step-size by $h$ in the 1D case and divide $[0,1]$ into $N_x$ equal segments. 
Define $x_i=ih$, $i=0,1,2,\cdots,N_x$, with $x_0=0$, $x_{N_x}=1$, and $\hat{x}_i=x_{i-\frac{1}{2}}=(i-\frac{1}{2})h$, $i=1,\cdots,N_x$.  Denote the magnetization obtained by the numerical scheme at $(\hat{x}_i,t^n)$ by $\m_i^n$, in which we have introduced $t^n=nk$, with $k$ being the temporal step-size, and $n\leq \left\lfloor\frac{T}{k}\right\rfloor$, $T$ being the final time. To approximate the boundary condition \cref{boundary}, we introduce ghost points $x_{-\frac32}, x_{-\frac{1}{2}}, x_{N_x+\frac{1}{2}}, x_{N_x+\frac32}$ and apply Taylor expansions for $x_{-\frac32}$, $x_{-\frac{1}{2}}$, $x_{\frac{1}{2}}$, $x_{\frac32}$ at $x_0$, and $x_{N_x+\frac{3}{2}}$,  $x_{N_x+\frac{1}{2}}$, $x_{N_x-\frac{1}{2}}$, $x_{N_x+\frac{3}{2}}$ at $x_{N_x}$, respectively.  We then obtain a third order extrapolation formula:
\[
\m_0=\m_1, \, \, \m_{-1} = \m_2 , \quad \m_{N_x+1}=\m_{N_x} , \, \,  \m_{N_x+2}=\m_{N_x-1} . 
\]
In the 3D case, we have spatial stepsizes $h_x=\frac{1}{N_x}$, $h_y=\frac{1}{N_y}$, $h_z=\frac{1}{N_z}$ and grid points $(\hat{x}_i,\hat{y}_j,\hat{z}_k)$, with $\hat{x}_i=x_{i-\frac{1}{2}}=(i-\frac{1}{2})h_x$, $\hat{y}_j=y_{j-\frac{1}{2}}=(j-\frac{1}{2})h_y$ and $\hat{z}_k=z_{k-\frac{1}{2}}=(k-\frac{1}{2})h_z$ ( $ 0 \le i \le N_x+1$, $0 \le j \le N_y+1$, $0 \le k \le N_z+1$). The extrapolation formula along the $z$ direction near $z=0$ and $z=1$ is
\begin{equation}\label{BC-1}
\m_{i,j,0}=\m_{i,j,1}, \, \, \m_{i,j,-1}=\m_{i,j,2} , \, \,  
\m_{i,j,N_z+1}=\m_{i,j,N_z} , \, \, \m_{i,j,N_z+2}=\m_{i,j,N_z -1} . 
\end{equation}
Extrapolation formulas for the boundary condition along other directions can be derived similarly.

In addition, given $\m_e (\cdot, t=0)$ as the exact initial data at $t=0$ (with $\m_e$ the exact solution), the numerical initial data for $\m$ is set as  
\begin{equation} 
  \m_{i,j,k}^0 = {\mathcal P}_h \m_e (\hat{x}_i, \hat{y}_j, \hat{z}_k, t=0) ,  \quad 
  \mbox{${\mathcal P}_h$ is the point-wise interpolation.}   \label{initial data-1} 
\end{equation} 

In addition, to obtain a fourth order spatial accuracy, the following long-stencil difference operators are introduced, to approximate $\partial_x$, $\partial_x^2$, respectively:  
\begin{eqnarray} 
  \hspace{-0.35in}  
  {\mathcal D}_{x,(4)}^1 f_{i,j,k} &=& \tilde{D}_x ( 1 - \frac{h^2}{6} D_x^2 ) f_{i,j,k} \nonumber 
\\
  &=& 
   \frac{  f_{i-2,j,k} - 8 f_{i-1,j,k}  + 8 f_{i+1,j,k} - f_{i+2,j,k} }{12 h} ,  
  \label{FD-4th-1} 
\\
  \hspace{-0.35in}  
   {\mathcal D}_{x,(4)}^2 f_{i,j,k} &=& D_x^2 ( 1 - \frac{h^2}{12} D_x^2 ) f_{i,j,k}  \nonumber 
\\
  &=& 
    \frac{ - f_{i-2,j,k} + 16 f_{i-1,j,k,k} - 30 f_{i,j,k} + 16 f_{i+1,j,k} - f_{i+2,j,k} }{12 h^2 } . 
   \label{FD-4th-2} 
\end{eqnarray} 
The long-stencil difference operators in the $y$ and $z$ directions, namely, ${\mathcal D}_{y,(4)}^1$, ${\mathcal D}_{y,(4)}^2$, ${\mathcal D}_{z,(4)}^1$, ${\mathcal D}_{z,(4)}^2$, could be defined in a similar fashion. In turn, we denote $\Delta_{h,(4)} = {\mathcal D}_{x,(4)}^2 + D_{y, (4)}^2 + D_{z, (4)}^2$. 

Denote the temporal step-size by $k$, and define $t^n=nk$, $n\leq \left\lfloor\frac{T}{k}\right\rfloor$ with $T$ the final time. The third-order BDF approximation is applied to the temporal derivative:
\begin{equation*}  \label{BDF2-1}
\frac{\frac{11}{6} \m_h^{n+3} - 3 \m_h^{n+2}+\frac32 \m_h^{n+1} - \frac13 \m_h^n}{k} = \frac{\partial}{\partial t}\m_h^{n+3} + \mathcal{O}(k^3) . 	 	
\end{equation*}
Note that the right hand side of the above equation is evaluated at $t^{n+3}$, a direct application of the BDF method leads to a fully nonlinear scheme. 
To overcome this subtle difficulty, we make use of the alternate PDE formulation~\eqref{equation-LL-alt}, treat the linear diffusion term $\alpha \Delta \m$ implicitly, and both nonlinear terms, namely $- \m \times \Delta \m$ and $\alpha | \nabla \m |^2 \m$, in a fully explicit way. Afterward, a point-wise projection is applied to the intermediate field, so that the numerical solution of $\m$ has a unit length at a point-wise level. 
In more details, the following numerical scheme is proposed: 
\begin{align}
\hat{\m}_h^{n+3} &= 3 \m_h^{n+2} - 3 \m_h^{n+1} + \m_h^n , \, \, \, 
\hat{\tilde{\m}}_h^{n+3} = 3 \tilde{\m}_h^{n+2} - 3 \tilde{\m}_h^{n+1} + \tilde{\m}_h^n , 
\label{cc} 
\end{align}

\begin{align}\label{scheme-1-1}
  & 
\frac{\frac{11}{6} \tilde{\m}_h^{n+3} - 3 \tilde{\m}_h^{n+2} + \frac32 \tilde{\m}_h^{n+1} 
- \frac13 \tilde{\m}_h^n }{k} \\ 
= &  - \hat{\m}_h^{n+3} \times \Delta_{h, (4)} \hat{\tilde{\m}}_h^{n+3} 
 + \alpha \Delta_{h, (4)} \tilde{\m}_h^{n+3}  
 + \alpha | \tilde{\nabla}_{h, (4)} \hat{\m}_h^{n+3} |^2 \hat{\m}_h^{n+3} , \nonumber
\end{align}
\begin{align}
\m_h^{n+3} &= \frac{\tilde{\m}_h^{n+3}}{ |\tilde{\m}_h^{n+3}| } , \label{scheme-1-2}
\end{align}
in which $| \tilde{\nabla}_{h, (4)} \f_h |^2$ is defined as (for $\f_h = (f_1, f_2, f_3 )^T)$:  
   \begin{equation} 
   | \tilde{\nabla}_{h, (4)} \f_h |^2 = \sum_{\ell=1}^3 \Big( 
   ( {\mathcal D}_{x,(4)}^1f_\ell )^2  + ( {\mathcal D}_{y,(4)}^1f_\ell )^2 
   + ( {\mathcal D}_{z,(4)}^1f_\ell )^2 \Big) . \label{defi-gradient-4th} 
\end{equation} 
The discrete boundary condition~\eqref{BC-1} is imposed for $\tilde{\m}_h^{n+3}$ in~\eqref{scheme-1-1}. In fact, this boundary condition could be rewritten as $(\nabla_h \tilde{\m}_h^{n+3} \cdot \n ) \mid_{\partial \Omega} =0$.

\begin{remark}
To kick start the iteration of, we can use the second-order semi-implicit projection scheme, and the numerical method is still third-order accurate.
\end{remark}
	
\begin{remark} 
With both nonlinear terms treated fully explicitly in the numerical scheme~\eqref{scheme-1-1}, this approach would greatly improve the computational efficiency, since only a Poisson equation needs to be solved at each time step.  
\end{remark}

\subsection{Main theorem}

For simplicity of presentation, we assume that $N_x = N_y =N_z=N$ so that $h_x = h_y = h_z =h$. An extension to the general case is straightforward. In the finite difference approximation, all the numerical values are assigned on the numerical grid points. As a result, the discrete grid functions (with notations $\f_h$, $\g_h$), which are only defined over the corresponding numerical grid points, are introduced.    

First, we introduce the discrete $\ell^2$ inner product and discrete $\| \cdot \|_2$ norm.
\begin{definition}[Inner product and $\| \cdot \|_2$ norm]
	For grid functions $\f_h$ and $\g_h$ over the uniform numerical grid, we define
	\begin{align}
	\langle {\bm f}_h,{\bm g}_h \rangle = h^d\sum_{\mathcal{I}\in \Lambda_d} \f_{\mathcal{I}}\cdot\g_{\mathcal{I}},  \label{defi-inner product-1} 
	\end{align}
	where $\Lambda_d$ is the index set and $\mathcal{I}$ is the index which closely depends on $d$.
	In turn, the discrete $\| \cdot \|_2$ norm is given by
	\begin{equation}
	\| {\bm f}_h \|_2 =  ( \langle {\bm f}_h,{\bm f}_h \rangle )^{1/2} . \label{defi-L2 norm}
	\end{equation}
	In addition, the discrete $H_h^1$-norm is given by $\| \f_h \|_{H_h^1}^2 :=\|\f_h\|_2^2+\|\nabla_h \f_h\|_2^2$.
\end{definition}

\begin{definition}[Discrete $\| \cdot \|_\infty$ norm]
	For the grid function $\f_h$ over the uniform numerical grid, we define
	\begin{align*}
	\| \f_h \|_{\infty} = \max_{\mathcal{I}\in\Lambda_d}\|\f_{\mathcal{I}} \|_{\infty} .
	\end{align*}
\end{definition}

\begin{definition}
	For the grid function $\f_h$, we define the average of summation as
	\begin{align*}
	\overline{\f}_h=h^d\sum_{\mathcal{I}\in \Lambda_d}\f_{\mathcal{I}}.
	\end{align*}
\end{definition}

\begin{definition}
       For any grid function $\f_h$ with $\overline{\f_h}=0$, a discrete inverse Laplacian operator is defined as: $\ppsi_h = (-\Delta_{h, (4)})^{-1} \f_h$ is the unique grid function satisfying  
\begin{equation*} 
     - \Delta_{h, (4)} \ppsi_h =  \f_h ,  \quad  
     (\nabla_h \ppsi_h \cdot \n ) \mid_{\partial \Omega} =0  , \quad    
       \overline{\ppsi_h} = 0 . 
\end{equation*} 
It is noticed that the zero-average constraint, $\overline{\ppsi_h}=0$, makes the operator $(-\Delta_{h, (4)})^{-1}$ uniquely defined. In turn, 
a discrete $H_h^{-1}$-norm is introduced for any $\f_h$ with $\overline{\f_h}=0$:  
	\begin{equation*}
	\|\f_h\|_{-1}^2=\langle(-\Delta_{h, (4)})^{-1}\f_h,\f_h\rangle.
	\end{equation*}
\end{definition}

The unique solvability analysis of scheme~\cref{cc}-\cref{scheme-1-2} is based on a rewritten form of \cref{scheme-1-1}:  
\begin{equation*} 
\begin{aligned} 
 \Big( \frac{11}{6 k} I - \alpha \Delta_{h, (4)} \Big) \tilde{\m}_h^{n+3} 
 = \f_h^n := & \frac{3 \tilde{\m}_h^{n+2} - \frac32 \tilde{\m}_h^{n+1} + \frac13 \tilde{\m}_h^n }{k} 
\\
  & 
  - \hat{\m}_h^{n+3} \times \Delta_h \hat{\m}_h^{n+3}  
 + \alpha | \tilde{\nabla}_{h, (4)} \hat{\m}^{n+3} |^2 \hat{\m}^{n+3} .  
\end{aligned} 
\end{equation*} 
The left hand side corresponds to a positive-definite symmetric operator, with discrete Fourier Cosine transformation could be very efficiently applied. As a result, its unique solvability is obvious.

The main theoretical result is the optimal rate convergence analysis. 

\begin{thm}\label{cccthm2} Let $\m_e \in C^4 ([0,T]; C^0) \cap C^3([0,T]; C^1) \cap L^{\infty}([0,T]; C^6)$ be the exact solution of \cref{c1} with the initial data $\m_e ({\x},0)=\m_e ^0({\x})$ and ${\m}_h$ be the numerical solution of the equation~\cref{cc}-\cref{scheme-1-2} with the initial data ${\m}_h^0=\m_{e,_h}^0$, $\m_h^1= \m _{e,h}^1$ and $\m_h^2= \m _{e,h}^2$. Suppose that the initial error satisfies $\|\m_{e,h}^\ell - \m_h^\ell \|_2 +\|\nabla_h ( \m_{e,h}^\ell - \m_h^\ell ) \|_2 = \mathcal{O} (k^3 + h^4),\,\ell=0,1, 2$, and $k\leq \mathcal{C}h$. In addition, we assume that $\alpha > 7$. Then the following convergence result holds as $h$ and $k$ goes to zero:
	\begin{align} \label{convergence-0} 
	\| \m_{e,h}^n - \m_h^n \|_{2}+ \|\nabla_h ( \m_{e,h}^n - \m_h^n ) \|_{2} &\leq \mathcal{C}(k^3+h^4) , \quad \forall n \ge 3 ,
	\end{align}	
	in which the constant $\mathcal{C}>0$ is independent of $k$ and $h$.
\end{thm}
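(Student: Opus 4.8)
The plan is to combine a consistency analysis with a stability/convergence argument carried out by induction on the time level $n$, using a discrete energy method tailored to the BDF3 temporal discretization. First I would establish the consistency. Under the regularity assumption $\m_e \in C^4([0,T];C^0)\cap C^3([0,T];C^1)\cap L^{\infty}([0,T];C^6)$, Taylor expansion in time shows that the BDF3 stencil approximates $\partial_t \m_e$ at $t^{n+3}$ with local truncation error $\mathcal{O}(k^3)$, the extrapolation $3\m^{n+2}-3\m^{n+1}+\m^n$ reproduces $\m^{n+3}$ to $\mathcal{O}(k^3)$, and the long-stencil operators \eqref{FD-4th-1}--\eqref{FD-4th-2} are fourth-order accurate, so that $\Delta_{h,(4)}$ and $|\tilde{\nabla}_{h,(4)}\cdot|^2$ carry spatial truncation errors $\mathcal{O}(h^4)$. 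Since $|\m_e|\equiv 1$, the normalization \eqref{scheme-1-2} applied to the exact solution is consistent to the same order. Collecting these, the exact solution satisfies the scheme \eqref{cc}--\eqref{scheme-1-2} up to a consistency term $\bm{\tau}^n$ with $\|\bm{\tau}^n\|_2 = \mathcal{O}(k^3+h^4)$.

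Next I would introduce the pre- and post-projection error functions $\tilde{\bm e}^n := \tilde{\m}_{e,h}^n - \tilde{\m}_h^n$ and $\bm e^n := \m_{e,h}^n - \m_h^n$, and subtract the numerical scheme from the consistency relation to obtain the error evolution equation; here the implicit diffusion produces $\alpha\Delta_{h,(4)}\tilde{\bm e}^{n+3}$, while the nonlinear terms split, through the product/extrapolation structure, into an error factor times a bounded factor plus a bounded factor times an error factor. The heart of the argument is the energy estimate: I would take the discrete inner product of the error equation with the Nevanlinna--Odeh multiplier combination associated with BDF3 (equivalently, the $G$-stability form), so that the temporal terms telescope into a nonnegative discrete energy increment $\mathcal{E}^{n+3}-\mathcal{E}^{n+2}$ and the implicit diffusion yields the coercive quantity $\alpha\|\nabla_h\tilde{\bm e}^{n+3}\|_2^2$. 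The consistency contribution and the lower-order nonlinear error from the $\alpha|\tilde{\nabla}_{h,(4)}\hat{\m}_h^{n+3}|^2\hat{\m}_h^{n+3}$ term (see \eqref{defi-gradient-4th}) are controlled by Cauchy--Schwarz and Young's inequalities, using an a priori $\ell^{\infty}$ bound on the numerical solution supplied by the induction hypothesis together with the inverse inequality permitted by $k\le \mathcal{C}h$.

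The delicate term is the explicitly treated gyromagnetic contribution $\hat{\m}_h^{n+3}\times\Delta_{h,(4)}\hat{\tilde{\bm e}}^{n+3}$, which carries a discrete Laplacian acting on the error. After summation by parts one derivative is transferred onto the cross-product factor, leaving $\nabla_h\tilde{\bm e}$ paired against $\nabla_h$ of uniformly bounded grid functions, which is then absorbed into the coercive term at the cost of a fixed multiplicative constant. It is precisely the requirement that $\alpha\|\nabla_h\tilde{\bm e}^{n+3}\|_2^2$ dominate this absorbed constant, together with the constants generated by the $G$-stability multiplier, that forces the sharp lower bound $\alpha>7$.

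Finally I would transfer the pre-projection estimate to the post-projection error. Invoking the projection stability lemma (as in the earlier second-order analysis), the normalization \eqref{scheme-1-2} satisfies $\|\bm e^{n+3}\|_2+\|\nabla_h\bm e^{n+3}\|_2 \le \|\tilde{\bm e}^{n+3}\|_2+\|\nabla_h\tilde{\bm e}^{n+3}\|_2 + \mathcal{C}(k^3+h^4)$, valid because the a priori bound keeps $\tilde{\m}_h^{n+3}$ uniformly bounded away from the origin. Combining this with the energy estimate and applying the discrete Gronwall inequality closes the induction, propagating \eqref{convergence-0} from step $n+2$ to step $n+3$; summing the coercive terms over $n$ then delivers the $\ell^2([0,T];H_h^1)$ component. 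The main obstacle is the simultaneous handling of BDF3 energy stability and the explicitly discretized high-order gyromagnetic term: the $G$-stability multiplier for BDF3 is considerably less transparent than its BDF2 counterpart, and closing the absorption of the $\Delta_{h,(4)}\hat{\tilde{\bm e}}$ term within the available diffusion is exactly what pins down the damping threshold.
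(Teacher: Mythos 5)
Your overall architecture (consistency, error equation, BDF3 telescope/$G$-stability multiplier, projection stability lemma, Gronwall, induction closed via inverse inequalities under $k\le \mathcal{C}h$) matches the paper, but there is a genuine gap in the core stability argument, and you have mislocated the origin of the threshold $\alpha>7$. You propose a single energy estimate in which the gyromagnetic term $\hat{\m}_h^{n+3}\times \Delta_{h,(4)}\hat{\tilde{\e}}_h^{n+3}$ is handled by summation by parts and then absorbed into $\alpha\|\nabla_h\tilde{\e}_h^{n+3}\|_2^2$. This cannot deliver $\alpha>7$: after summation by parts the product rule produces, besides the term carrying $\|\hat{\m}_h\|_\infty\approx 1$, a term carrying $\|\nabla_h\hat{\m}_h^{n+3}\|_\infty$, which is only bounded by an uncontrolled, solution-dependent constant $\mathcal{C}$ (see \eqref{bound-3}); the resulting condition would read $\alpha>\mathcal{C}(\m_e)$, not $\alpha>7$. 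Worse, as the paper points out after \eqref{ccc34}, the $\ell^2$-level estimate with the multiplier $2\tilde{\e}_h^{\ell+3}-\tilde{\e}_h^{\ell+2}$ leaves $H_h^1$ norms of the error on the right-hand side that the telescoped energy does not dominate, so Gronwall does not close at that level at all. The paper's fix is a \emph{second} energy estimate: testing the error equation against $-\Delta_{h,(4)}(2\tilde{\e}_h^{\ell+3}-\tilde{\e}_h^{\ell+2})$, so that the implicit diffusion yields the coercive quantity $\alpha\|\Delta_{h,(4)}\tilde{\e}_h^{\ell+3}\|_2^2$. At this level the gyro term is estimated \emph{without} summation by parts, directly by Cauchy--Schwarz with the sharpened sup-norm bound $\|\hat{\m}_h^{\ell+3}\|_\infty\le \beta_1=1+\tfrac{\alpha-7}{14}$ of \eqref{bound-5-3} (itself resting on $|\m_e|\equiv 1$ and the $\mathcal{O}(\dt^3)$ extrapolation identity \eqref{bound-5-1}, plus the a priori error bound $\dt+h$). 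The weighted decomposition of $\|\Delta_{h,(4)}\hat{\tilde{\e}}_h^{\ell+3}\|_2$ in \eqref{ccc11} produces Laplacian-squared coefficients summing to $7\beta_1$, and the absorption books close precisely because $\alpha-7\beta_1-\tfrac{\gamma_0}{4}=\tfrac{\gamma_0}{4}>0$ with $\gamma_0=\alpha-7$; the number $7$ is the weighted sum of the extrapolation coefficients $3,-3,1$ against the multiplier $2,-1$, not a gradient-level constant. Without this second estimate and the sharp $\beta_1$ bound, your induction does not close.

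A second, smaller gap is in your consistency claim. You assert fourth-order spatial truncation throughout, but the symmetric ghost-point extrapolation \eqref{BC-1} is generically only $\mathcal{O}(h^3)$ accurate at the boundary (the $-\tfrac{h^3}{24}\partial_z^3\m_e$ term in \eqref{exact-4} survives the Neumann condition), and after division by $h^2$ in $\Delta_{h,(4)}$ this would destroy the $\mathcal{O}(h^4)$ truncation error near $\partial\Omega$. The paper devotes the first part of the proof to upgrading this to $\mathcal{O}(h^5)$: differentiating the reformulated PDE \eqref{equation-LL-alt} in $z$ along $\Gamma_z$, using the Neumann condition to kill all but the third-normal-derivative terms, and observing that the resulting $3\times 3$ linear system \eqref{scheme-BC-5-1}--\eqref{scheme-BC-5-2} has coefficient matrix with positive determinant for any $\alpha>0$, forcing $\partial_z^3 m_1=\partial_z^3 m_2=\partial_z^3 m_3=0$ on the boundary \eqref{scheme-BC-5-3}. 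Your proposal needs this (or an equivalent boundary cancellation argument), and also the $H_h^1$ bound \eqref{truncation error-1} on the truncation error --- required because the second energy estimate pairs $\tau^{\ell+3}$ with a discrete Laplacian --- together with the discrete Gagliardo--Nirenberg inequality used in \eqref{bound-7} to recover the $\|\e_h^n\|_\infty\le \dt+h$ part of the a priori assumption, which your sketch does not address.
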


\subsection{A few preliminary estimates} 
The proof of the standard inverse inequality and discrete Gronwall inequality could be obtained in existing textbooks; we just cite the results here. The inverse inequality presented in~\cite{Ciarlet1978} is in the finite element version; its extension to the finite difference version is straightforward.  
we just cite the results here.
\begin{lemma}(Inverse inequality) \cite{Ciarlet1978} \label{ccclemC1}.
	The inverse inequality implies that
	\begin{align*}\label{ccc39}
	\|{\e}_h^{n}\|_{\infty} \leq \gamma {h}^{-d/2}\|{\e}_h^{n}\|_2 ,  \quad
	\|\nabla_h{\e}_h^{n}\|_{\infty} \leq \gamma {h}^{-d/2}\|\nabla_h{\e}_h^{n}\|_2 , \nonumber
	\end{align*}
in which constant $\gamma$ depends on the form of the discrete $\| \cdot \|_2$ norm. Under the definition~\eqref{defi-inner product-1} and \eqref{defi-L2 norm} for the cell-centered grid function, such a constant could be taken as $\gamma =1$.  
\end{lemma}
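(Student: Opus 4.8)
The plan is to prove the two inequalities directly from the definitions of the discrete $\|\cdot\|_\infty$ and $\|\cdot\|_2$ norms, rather than invoking the finite element machinery of \cite{Ciarlet1978}. As the remark preceding the lemma indicates, the finite element statement carries shape-regularity and basis constants inside $\gamma$; in the cell-centered finite difference setting the claim collapses to an elementary comparison between a pointwise maximum and a uniformly weighted $\ell^2$ sum, and this is exactly what pins the constant down to $\gamma = 1$.

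First I would unwind the two norms. By definition $\|{\e}_h^n\|_\infty = \max_{\mathcal{I} \in \Lambda_d} \|{\e}_{\mathcal{I}}^n\|_\infty$, where the inner $\|\cdot\|_\infty$ is the vector $\infty$-norm on the three magnetization components, while $\|{\e}_h^n\|_2^2 = h^d \sum_{\mathcal{I} \in \Lambda_d} |{\e}_{\mathcal{I}}^n|^2$ with $|\cdot|$ the Euclidean norm, following \eqref{defi-inner product-1}. The key observation is twofold: for each fixed index, $\|{\e}_{\mathcal{I}}^n\|_\infty \le |{\e}_{\mathcal{I}}^n|$, since the $\infty$-norm of a finite vector never exceeds its Euclidean norm; and the maximum of a collection of nonnegative numbers is bounded by their sum. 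Chaining these gives
\[
\|{\e}_h^n\|_\infty^2 = \max_{\mathcal{I} \in \Lambda_d} \|{\e}_{\mathcal{I}}^n\|_\infty^2 \le \max_{\mathcal{I} \in \Lambda_d} |{\e}_{\mathcal{I}}^n|^2 \le \sum_{\mathcal{I} \in \Lambda_d} |{\e}_{\mathcal{I}}^n|^2 = h^{-d} \|{\e}_h^n\|_2^2 ,
\]
and taking square roots yields the first inequality with $\gamma = 1$.

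For the gradient estimate I would apply the identical argument to the grid function $\nabla_h {\e}_h^n$ in place of ${\e}_h^n$. Since $\nabla_h {\e}_h^n$ is itself a grid function over the same index set $\Lambda_d$, and both $\|\cdot\|_\infty$ and $\|\cdot\|_2$ are defined for it in exactly the same way, the entire chain of inequalities transfers verbatim to produce $\|\nabla_h {\e}_h^n\|_\infty \le h^{-d/2} \|\nabla_h {\e}_h^n\|_2$.

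There is no substantial obstacle here: the bound is a direct consequence of the uniform cell-volume weight $h^d$ built into the discrete inner product, and the only point meriting care is the bookkeeping of the two distinct $\infty$-norms, namely the vector $\infty$-norm on the three magnetization components versus the grid $\infty$-norm taken over $\Lambda_d$. Keeping these separate is precisely what fixes the constant at $\gamma = 1$ rather than a dimension- or mesh-dependent value, and it confirms the claim that the extension of the finite element inverse inequality of \cite{Ciarlet1978} to the present finite difference framework is indeed straightforward.
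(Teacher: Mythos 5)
Your proof is correct, and it is in fact more than the paper provides: the paper gives no proof of this lemma at all, instead citing the finite element inverse inequality of \cite{Ciarlet1978} and asserting that the extension to the finite difference setting is ``straightforward.'' What you have done is carry out that straightforward extension explicitly, and your argument is exactly the right one for the cell-centered framework: the chain $\|{\e}_{\mathcal I}^n\|_\infty \le |{\e}_{\mathcal I}^n|$ (vector $\infty$-norm dominated by the Euclidean norm), followed by $\max \le \sum$ over nonnegative terms, followed by extraction of the uniform cell-volume weight $h^d$ from the definition \eqref{defi-inner product-1}, yields $\|{\e}_h^n\|_\infty \le h^{-d/2}\|{\e}_h^n\|_2$ with no hidden constant, which substantiates the paper's otherwise unproved claim that $\gamma = 1$. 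Your observation that this pins down $\gamma$ precisely because the finite difference weight is uniform --- whereas the finite element version of \cite{Ciarlet1978} absorbs shape-regularity and basis-function constants into $\gamma$ --- is the correct explanation of the remark in the lemma statement. The one point worth a clarifying sentence in your write-up of the gradient case: depending on how $\nabla_h$ is realized, its components may live on shifted (face-centered) grid locations rather than on $\Lambda_d$ itself, so the index set changes; but since the corresponding $\|\cdot\|_2$ and $\|\cdot\|_\infty$ norms for $\nabla_h {\e}_h^n$ carry the same uniform weight $h^d$ and your argument uses only nonnegativity and that weight, the conclusion is indeed unaffected, and ``transfers verbatim'' is accurate in substance.
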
 

\begin{lemma}(Discrete Gronwall inequality) \cite{Girault1986} \label{ccclem1}. Let $\{\alpha_j\}_{j\geq 0}$, $\{\beta_j\}_{j\geq 0}$ and $\{\omega_j\}_{j\geq 0}$ be sequences of real numbers such that
	\begin{equation*}
	\alpha_j\leq \alpha_{j+1},\quad \beta_j\geq 0,\quad and \quad \omega_j\leq \alpha_j+\sum_{i=0}^{j-1}\beta_i\omega_i , \quad \forall j \geq 0.
	\end{equation*}
	Then it holds that
	\begin{equation*}
	\omega_j\leq \alpha_j\exp\left\{\sum_{i=0}^{j-1}\beta_i\right\} , \quad \forall j \geq 0.
	\end{equation*}
\end{lemma}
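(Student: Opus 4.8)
The plan is to prove the sharper product bound
\[
\omega_j \leq \alpha_j \prod_{i=0}^{j-1}(1+\beta_i) \qquad \forall\, j \geq 0
\]
by strong induction on $j$, and then to pass from the product to the exponential via the elementary scalar inequality $1+x \leq e^x$. Writing $P_j := \prod_{i=0}^{j-1}(1+\beta_i)$, with the empty product convention $P_0 = 1$, the target of the induction is $\omega_j \leq \alpha_j P_j$. Once this is in hand, the stated conclusion follows immediately, since $P_j = \prod_{i=0}^{j-1}(1+\beta_i) \leq \prod_{i=0}^{j-1} e^{\beta_i} = \exp\{\sum_{i=0}^{j-1}\beta_i\}$, and multiplying through by the nonnegative quantity $\alpha_j$ preserves the direction of the inequality.

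For the base case $j=0$ the structural hypothesis reads $\omega_0 \leq \alpha_0$, the sum over $0 \leq i \leq -1$ being empty, which is exactly $\omega_0 \leq \alpha_0 P_0$. For the inductive step I assume $\omega_i \leq \alpha_i P_i$ for every $i < j$. Substituting these bounds into the hypothesis $\omega_j \leq \alpha_j + \sum_{i=0}^{j-1}\beta_i\omega_i$ and using $\beta_i \geq 0$ to keep the inequalities aligned under the weighted sum, I obtain
\[
\omega_j \leq \alpha_j + \sum_{i=0}^{j-1}\beta_i\, \alpha_i P_i .
\]
Here I invoke the monotonicity $\alpha_i \leq \alpha_j$ valid for all $i \leq j-1$: since each coefficient $\beta_i P_i$ is nonnegative, the factor $\alpha_i$ may be replaced by the larger $\alpha_j$, which yields $\omega_j \leq \alpha_j\big(1 + \sum_{i=0}^{j-1}\beta_i P_i\big)$.

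The crux of the argument is then the telescoping identity
\[
1 + \sum_{i=0}^{j-1}\beta_i P_i = \prod_{i=0}^{j-1}(1+\beta_i) = P_j ,
\]
itself verified by a one-line induction: advancing the index from $j$ to $j+1$ adds the term $\beta_j P_j$ to the left-hand side, and $P_j + \beta_j P_j = (1+\beta_j)P_j = P_{j+1}$ matches the right-hand side. Combining this identity with the previous display gives $\omega_j \leq \alpha_j P_j$ and closes the induction.

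The routine ingredients are the two short inductions and the inequality $1+x\leq e^x$; the step I would flag is the way two sign/ordering facts must operate in tandem. The non-negativity $\beta_i \geq 0$ is precisely what allows the induction hypothesis $\omega_i \leq \alpha_i P_i$ to be carried through the weighted sum without reversal, while the monotonicity $\alpha_i \leq \alpha_j$ is exactly what permits the single factor $\alpha_j$ to be extracted uniformly from the sum. The passage to the exponential additionally uses $\alpha_j \geq 0$, which holds in all the applications of interest here, where $\alpha_j$ encodes nonnegative consistency and initial-error bounds; absent that sign information the strongest clean statement is the product form $\omega_j \leq \alpha_j P_j$, which already carries all the content.
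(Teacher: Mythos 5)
The paper gives no proof of this lemma at all: it explicitly states that the proof ``could be obtained in existing textbooks'' and cites Girault--Raviart, so there is no internal argument to compare yours against. Your blind proof is correct and self-contained. The strong induction establishing the sharper product bound $\omega_j \le \alpha_j \prod_{i=0}^{j-1}(1+\beta_i)$ is sound: the base case is exactly the empty-sum hypothesis, the replacement of $\alpha_i$ by $\alpha_j$ inside the weighted sum is legitimate because each coefficient $\beta_i P_i$ is nonnegative (indeed $P_i \ge 1$ since $\beta_k \ge 0$), and the telescoping identity $1+\sum_{i=0}^{j-1}\beta_i P_i = P_j$ checks out by the one-line induction you give. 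You were also right, and careful, to flag the sign issue in the final step: as literally transcribed the lemma omits the hypothesis $\alpha_j \ge 0$, and without it the exponential form is actually false --- take $\alpha_j \equiv -1$, $\beta_j \equiv 1$, $\omega_0 = -1$, $\omega_1 = -2$, which satisfies $\omega_1 \le \alpha_1 + \beta_0\omega_0 = -2$ yet violates $\omega_1 \le \alpha_1 e^{\beta_0} = -e$ --- whereas your product form $\omega_j \le \alpha_j P_j$ survives with no sign hypothesis. Since in this paper $\alpha_j$ always collects nonnegative quantities (accumulated truncation errors of size $\mathcal{C}T(k^6+h^8)$ plus initial-error terms), the nonnegativity is automatic in every application, so your proof fully covers the paper's use of the lemma and is, if anything, slightly sharper than the cited statement.
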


\begin{lemma}[Summation by parts]\label{summation}
	For any grid functions $\f_h$ and $\g_h$ satisfying the discrete boundary condition~\cref{BC-1}, the following identities are valid:
	\begin{align} 
	\left\langle -\Delta_h \f_h,\g_h\right\rangle = & \left\langle \nabla_h \f_h,\nabla_h \g_h\right\rangle ,  \label{sum1}	 
\\
   	\langle D_x^4 \f_h,\g_h \rangle = & 
	\langle D_x^2 \f_h, D_x^2 \g_h \rangle ,  \, \, 
	\langle D_y^4 \f_h,\g_h \rangle =  
	\langle D_y^2 \f_h, D_y^2 \g_h \rangle  ,    \nonumber 
\\ 
	\langle D_z^4 \f_h,\g_h \rangle =  & 
	\langle D_z^2 \f_h, D_z^2 \g_h \rangle	 ,  \label{sum2}	 	
\\
    \langle - \Delta_{h, (4)} \f_h , \g_h \rangle 
    = & \langle \nabla_{h, (4)} \f_h , \nabla_{h, (4)} \g_h \rangle  \nonumber 
\\
    :=  & \langle \nabla_h \f_h, \nabla_h \g_h \rangle   
    + \frac{h^2}{12} \langle \Delta_h \f_h, \Delta_h \g_h \rangle .  \label{sum3}	 
	\end{align} 
In turn, for any discrete grid function $\f_h$, the following norm could be introduced: 
\begin{equation} 
\begin{aligned} 
  \| \nabla_{h, (4)} \f_h \|_2 =: & 
  \Big( \langle \nabla_{h, (4)} \f_h , \nabla_{h, (4)} \f_h \rangle \Big)^\frac12  
\\
  =& 
   \Big( \| \nabla_h \f_h \|_2^2    
    + \frac{h^2}{12} ( \| D_x^2 \f_h \|_2^2 + \| D_y^2 \f_h \|_2^2 
    + \| D_z^2 \f_h \|_2^2 )  \Big)^\frac12  . 
\end{aligned} 
    \label{defi-gradient-4th-L2 norm} 
\end{equation} 
\end{lemma}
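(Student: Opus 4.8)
The plan is to reduce everything to one-dimensional discrete Green's identities, treating the three coordinate directions separately, and then to assemble the three stated identities using the operator identity that links the long-stencil Laplacian to the standard one. Since the inner product $\langle\cdot,\cdot\rangle$ factorizes as a product of one-dimensional sums, and $\Delta_h = D_x^2 + D_y^2 + D_z^2$ while \cref{FD-4th-2} gives $\mathcal{D}_{s,(4)}^2 = D_s^2(1-\tfrac{h^2}{12}D_s^2) = D_s^2 - \tfrac{h^2}{12}D_s^4$ and hence $\Delta_{h,(4)} = \Delta_h - \tfrac{h^2}{12}(D_x^4 + D_y^4 + D_z^4)$, all three identities \cref{sum1}, \cref{sum2}, \cref{sum3} follow once the two building blocks — the first and second discrete Green's identities in a single direction — are established with their boundary terms controlled. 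I would prove \cref{sum1} and \cref{sum2} first, and then obtain \cref{sum3} purely algebraically from them.

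For the one-dimensional identities I would carry out summation by parts by telescoping. For the first identity, a single Abel summation gives $\langle -\Delta_h \f_h, \g_h\rangle = \langle \nabla_h \f_h, \nabla_h \g_h\rangle$ up to boundary contributions proportional to $(f_1 - f_0)g_1$ at the left edge and $(f_{N+1}-f_N)g_N$ at the right edge; the reflection conditions $f_0 = f_1$ and $f_{N+1}=f_N$ from \cref{BC-1} force these boundary edge-differences to vanish (this is precisely the discrete homogeneous Neumann condition), so the terms drop out and \cref{sum1} follows. For the second identity I would use the discrete second Green's identity $h^2\sum_{i=1}^{N}\bigl[(D_x^2 v)_i g_i - v_i (D_x^2 g)_i\bigr] = W_0 - W_N$, where $W_i := v_i g_{i+1} - v_{i+1} g_i$ is the discrete Wronskian; this is a clean telescoping identity, since the summand equals $(W_{i-1}-W_i)/h^2$. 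Taking $v = D_x^2 f$ turns the left side into $\langle D_x^4 \f_h, \g_h\rangle - \langle D_x^2 \f_h, D_x^2 \g_h\rangle$, so \cref{sum2} reduces to showing $W_0 = W_N = 0$.

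The one genuinely substantive point — and the main obstacle — is verifying that these Wronskians vanish. Here $g_0 = g_1$ and $g_N = g_{N+1}$ hold directly by \cref{BC-1}, but I also need the corresponding relations for $v = D_x^2 f$, namely $v_0 = v_1$ and $v_N = v_{N+1}$, and these are not assumed: they must be derived. The key computation is that the two-layer reflection on $f$ (namely $f_0 = f_1$, $f_{-1}=f_2$ at the left and $f_{N+1}=f_N$, $f_{N+2}=f_{N-1}$ at the right) forces the one-layer reflection on $D_x^2 f$: using $f_{-1}=f_2$ and $f_0=f_1$ one gets $v_0 = (f_{-1}-2f_0+f_1)/h^2 = (f_2-f_1)/h^2$ and $v_1 = (f_0-2f_1+f_2)/h^2 = (f_2-f_1)/h^2$, so $v_0 = v_1$, and symmetrically $v_N = v_{N+1}$. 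Then $W_0 = v_0 g_1 - v_1 g_0 = v_1 g_1 - v_1 g_1 = 0$ and likewise $W_N = 0$. This is why two ghost layers are needed and why \cref{BC-1} is stated with both $\m_{i,j,-1}=\m_{i,j,2}$ and $\m_{i,j,0}=\m_{i,j,1}$; I would present this inheritance computation carefully, since it is the only place where the specific extrapolation structure is used, and an incorrect ghost assignment would leave a nonzero boundary residual.

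Finally I would assemble the pieces. Combining $\Delta_{h,(4)} = \Delta_h - \tfrac{h^2}{12}\sum_s D_s^4$ with \cref{sum1} applied to $-\Delta_h$ and \cref{sum2} applied to each $D_s^4$ yields $\langle -\Delta_{h,(4)}\f_h, \g_h\rangle = \langle \nabla_h \f_h, \nabla_h \g_h\rangle + \tfrac{h^2}{12}\sum_s \langle D_s^2 \f_h, D_s^2 \g_h\rangle$, which is \cref{sum3}, with the middle term $\langle \Delta_h \f_h, \Delta_h \g_h\rangle$ understood as the directional sum $\sum_s \langle D_s^2 \f_h, D_s^2 \g_h\rangle$, consistent with the norm in \cref{defi-gradient-4th-L2 norm}. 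Setting $\g_h = \f_h$ then produces the stated expression for $\|\nabla_{h,(4)}\f_h\|_2$, completing the lemma. The remaining work is purely the routine bookkeeping of the interior telescoping sums, which I would not belabor.
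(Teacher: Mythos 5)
Your proposal is correct and follows essentially the same route as the paper, whose proof merely cites \cite{chen20} for \eqref{sum1}, asserts that \eqref{sum2} follows ``in the same manner,'' and obtains \eqref{sum3} algebraically from the decomposition $\Delta_{h,(4)}=\Delta_h-\frac{h^2}{12}(D_x^4+D_y^4+D_z^4)$; your Wronskian telescoping and the inheritance computation $v_0=v_1$, $v_N=v_{N+1}$ for $v=D_x^2\f_h$ supply exactly the details the paper skips. Your reading of the middle term in \eqref{sum3} as the directional sum $\sum_s\langle D_s^2\f_h,D_s^2\g_h\rangle$ rather than the literal $\langle\Delta_h\f_h,\Delta_h\g_h\rangle$ (which would carry cross terms in dimension $d\ge 2$) is also the right resolution, since it is what the algebra yields and what the norm formula \eqref{defi-gradient-4th-L2 norm} actually uses.
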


\begin{proof} 
The standard summation by parts formula~\eqref{sum1} has been proved in a recent work~\cite{chen20}. The identities in~\eqref{sum2} could be proved in the same manner, and the technical details are skipped for the sake of brevity. In turn, the formula~\eqref{sum3} is a direct consequence of the long-stencil difference definition~\eqref{FD-4th-2}, combined with~\eqref{sum1}, \eqref{sum2}. The proof of Lemma~\ref{summation} is complete.  
\end{proof} 

In addition, the following $\| \cdot \|_2$ bound estimates between $\nabla_h$, $\nabla_{h, (4)}$ and $\tilde{\nabla}_{h, (4)}$ (as introduced in~\eqref{defi-gradient-4th}) in the convergence analysis. The detailed proof has been provided in a recent article. 

\begin{lemma}[$\| \cdot \|_2$ bound for different gradient operators] \label{gradient bound}
	For any grid functions $\f_h$ satisfying the discrete boundary condition~\cref{BC-1}, the following inequalities are valid:
	\begin{align} 
	\| \nabla_h \f_h \|_2 \le & \| \nabla_{h, (4)} \f_h \|_2 
	\le   \frac{2}{\sqrt{3}} \| \nabla_h \f_h \|_2,  \label{gradient-L2-0-1}	 
\\
    \| \tilde{\nabla}_{h, (4)} \f_h \|_2 \le & \frac53 \| \nabla_h \f_h \|_2 ,   \, \, 
    \tilde{\nabla}_{h, (4)} \f_h= ( {\mathcal D}_{x,(4)}^1 \f_h ,  {\mathcal D}_{y,(4)}^1 \f_h ,  
  {\mathcal D}_{z,(4)}^1 \f_h ) .     
    \label{gradient-L2-0-2}        	 	
	\end{align} 
\end{lemma}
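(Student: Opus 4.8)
The plan is to reduce both statements to elementary scalar inequalities by diagonalizing all the difference operators simultaneously. Since every grid function under consideration obeys the homogeneous Neumann boundary condition~\eqref{BC-1}, the natural tool is the discrete cosine expansion adapted to this boundary condition: the operators $-D_x^2$, $D_x^2$, the centered difference $\tilde{D}_x$, and the long-stencil operator ${\mathcal D}^1_{x,(4)}$ (together with their $y$- and $z$-analogues) all share the same orthogonal eigenbasis, and on each mode they act as multiplication by a real symbol. Writing $\mu$ for a generic eigenvalue of $-D_x^2$, the standard spectral bound $0 \le \mu \le 4 h^{-2}$ holds, so that $s := \tfrac{h^2 \mu}{4} \in [0,1]$ is the single parameter governing every directional estimate. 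Via Parseval's identity each $\| \cdot \|_2$ quantity becomes a weighted sum of the corresponding squared symbol magnitudes, and the operator inequalities follow from pointwise inequalities in $s$.

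For the first pair of inequalities~\eqref{gradient-L2-0-1}, I would start from the exact identity~\eqref{defi-gradient-4th-L2 norm} already established in Lemma~\ref{summation},
\begin{equation*}
\| \nabla_{h, (4)} \f_h \|_2^2 = \| \nabla_h \f_h \|_2^2 + \frac{h^2}{12} \big( \| D_x^2 \f_h \|_2^2 + \| D_y^2 \f_h \|_2^2 + \| D_z^2 \f_h \|_2^2 \big) .
\end{equation*}
The lower bound $\| \nabla_h \f_h \|_2 \le \| \nabla_{h,(4)} \f_h \|_2$ is immediate, since the second term on the right is nonnegative. For the upper bound it suffices to prove the directional estimate $\tfrac{h^2}{4} \| D_x^2 \f_h \|_2^2 \le \| \nabla_{h,x} \f_h \|_2^2$ (and likewise in $y$, $z$), where $\| \nabla_{h,x} \f_h \|_2^2 = \langle -D_x^2 \f_h, \f_h \rangle$: in the spectral representation $\| D_x^2 \f_h \|_2^2$ carries the weight $\mu^2$ while $\| \nabla_{h,x} \f_h \|_2^2$ carries the weight $\mu$, so the inequality is exactly $\tfrac{h^2}{4} \mu^2 \le \mu$, i.e. $\mu \le 4 h^{-2}$, which is the spectral bound above. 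Summing the three directions gives $\tfrac{h^2}{12} \sum_\ell \| D_\ell^2 \f_h \|_2^2 \le \tfrac13 \| \nabla_h \f_h \|_2^2$, hence $\| \nabla_{h,(4)} \f_h \|_2^2 \le \tfrac43 \| \nabla_h \f_h \|_2^2$ and the constant $\tfrac{2}{\sqrt 3}$; the bound is sharp, attained by the highest-frequency mode $\mu = 4 h^{-2}$. A purely algebraic alternative avoids Fourier analysis: writing $-D_x^2 = \delta_x^\star \delta_x$ for the forward difference $\delta_x$ and using the operator bound $\| \delta_x \delta_x^\star \|_{\mathrm{op}} \le 4 h^{-2}$, one obtains $\| D_x^2 \f_h \|_2^2 = \langle \delta_x \f_h, \delta_x \delta_x^\star \delta_x \f_h \rangle \le 4 h^{-2} \| \delta_x \f_h \|_2^2$.

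For the second inequality~\eqref{gradient-L2-0-2} I would exploit the factorization recorded in~\eqref{FD-4th-1}, namely ${\mathcal D}^1_{x,(4)} = \tilde{D}_x \big( 1 - \tfrac{h^2}{6} D_x^2 \big)$. In the spectral basis the squared symbol magnitude of $\tilde{D}_x$ equals $\mu(1-s)$, while $1 - \tfrac{h^2}{6} D_x^2$ acts by $1 + \tfrac{2s}{3}$; therefore the squared symbol magnitude of ${\mathcal D}^1_{x,(4)}$ equals $\mu (1-s)\big( 1 + \tfrac{2s}{3} \big)^2$. Comparing against the weight $\mu$ of $\| \nabla_{h,x} \f_h \|_2^2$, the directional estimate reduces to the scalar bound $(1-s)\big(1 + \tfrac{2s}{3}\big)^2 \le \tfrac{25}{9}$ on $[0,1]$, which is clear from $1 - s \le 1$ and $\big(1 + \tfrac{2s}{3}\big)^2 \le \big(\tfrac53\big)^2$. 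Squaring the resulting per-direction estimate and summing over the three components then yields $\| \tilde{\nabla}_{h,(4)} \f_h \|_2^2 \le \tfrac{25}{9} \| \nabla_h \f_h \|_2^2$, which is~\eqref{gradient-L2-0-2}.

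The routine parts are the trigonometric identities for the symbols and the two scalar inequalities, neither of which is delicate. The main obstacle is the rigorous justification of the simultaneous diagonalization under the boundary condition~\eqref{BC-1}: one must verify that the ghost-point extrapolation underlying the long-stencil operators is consistent with the discrete cosine eigenbasis, so that $\tilde{D}_x$ and ${\mathcal D}^1_{x,(4)}$ are genuinely diagonalized and the summation-by-parts identities of Lemma~\ref{summation} hold with no residual boundary terms, and that the tensor-product structure in three dimensions allows the directional estimates to be summed component-wise into the stated vector-norm bounds. Once this spectral framework is in place, both inequalities follow from the elementary scalar estimates above.
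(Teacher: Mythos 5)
Your proposal is correct, and a point of comparison worth stating up front: the paper itself contains no proof of Lemma~\ref{gradient bound} --- it explicitly defers the details to a cited recent article --- so your argument cannot be matched against an in-text derivation and must be judged on its own, where it holds up. The step you flag as the ``main obstacle'' is in fact a one-line verification rather than a genuine difficulty: the ghost-value conditions~\eqref{BC-1} are exactly even reflection across the boundary faces, and the cell-centered cosine modes $\cos\bigl(m\pi(i-\tfrac12)h\bigr)$, $0\le m\le N-1$, satisfy those reflection identities exactly, diagonalize $D_x^2$ with eigenvalue $-\mu_m$, $\mu_m=4h^{-2}\sin^2(m\pi h/2)$, and are mapped by $\tilde D_x$ into mutually orthogonal sine modes with squared factor $\mu_m(1-s_m)$, so your Parseval computations and the tensorization in $y$, $z$ go through; your two scalar bounds, $h^2\mu\le 4$ and $(1-s)(1+\tfrac{2s}{3})^2\le\tfrac{25}{9}$, are both valid and yield~\eqref{gradient-L2-0-1} and~\eqref{gradient-L2-0-2}. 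Two small remarks: since $m\le N-1$ the top of the spectrum is strictly below $4h^{-2}$, so $2/\sqrt3$ is approached as $h\to0$ but not attained at fixed $h$ (your ``sharp, attained'' slightly overstates this); and the maximum of $(1-s)(1+\tfrac{2s}{3})^2$ on $[0,1]$ is $\tfrac{250}{243}$ at $s=\tfrac16$, so the true per-direction constant is about $1.02$ --- the lemma's $\tfrac53$ is far from optimal, and your generous estimate suffices. Methodologically, the constant $\tfrac53=1+\tfrac23$ strongly suggests the cited source derives~\eqref{gradient-L2-0-2} by the non-spectral route you sketch as an aside: the factorization~\eqref{FD-4th-1} plus the triangle inequality, $\|{\mathcal D}^1_{x,(4)}\f_h\|_2\le\|\tilde D_x\f_h\|_2+\tfrac{h^2}{6}\|\tilde D_x D_x^2\f_h\|_2\le(1+\tfrac23)\|\delta_x\f_h\|_2$, using the summation-by-parts identities of Lemma~\ref{summation} and the operator bound $h^2\|D_x^2\f_h\|_2^2\le 4\|\delta_x\f_h\|_2^2$ (and likewise $1+\tfrac{h^2}{12}\cdot\tfrac{4}{h^2}=\tfrac43$ for $2/\sqrt3$). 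Your DCT framework treats both inequalities uniformly and, as a bonus, exposes exactly how sharp (or slack) each stated constant is.
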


The following estimate will be utilized in the convergence analysis; its proof has been provided in a recent article~\cite{chen20}. In the sequel, for simplicity of our notation, we will use the uniform constant $\mathcal{C}$ to denote all the controllable constants in this paper.

\begin{lemma}[Discrete gradient acting on cross product] \cite{chen20} \label{lem27}
	For grid functions $\f_h$ and $\g_h$ over the uniform numerical grid, we have
	\begin{align}
	\|\nabla_{h, (4)} ({\f}_h \times{\g}_h ) \|_2^2 
	\le & \frac43 \|\nabla_h({\f}_h \times{\g}_h ) \|_2^2 \nonumber \\ 
	 \leq & \mathcal{C}\Big(\|{\f}_h\|_{\infty}^2\cdot \|\nabla_h {\g}_h\|_2^2+\|{\g}_h\|_{\infty}^2\cdot \|\nabla_h{\f}_h\|_2^2\Big),\label{lem27_1}\\
	\left\langle \f_h\times \Delta_{h, (4)} \g_h , \hat{\g}_h \right\rangle = & \left\langle \hat{\g}_h \times \f_h ,\Delta_{h, (4)} \g_h\right\rangle\label{lem27_2} .
	\end{align}
\end{lemma}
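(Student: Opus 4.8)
The plan is to handle the two inequalities in~\eqref{lem27_1} and the algebraic identity~\eqref{lem27_2} by separate mechanisms, since they rely on entirely different ideas. The first inequality in~\eqref{lem27_1} requires essentially no new work. Provided $\f_h$ and $\g_h$ satisfy the discrete boundary condition~\eqref{BC-1}, the product $\f_h \times \g_h$ inherits it as well: at the face $z=0$, for instance, $\f_{i,j,0}\times\g_{i,j,0} = \f_{i,j,1}\times\g_{i,j,1} = (\f_h\times\g_h)_{i,j,1}$, and similarly at every other face. Hence the gradient comparison bound~\eqref{gradient-L2-0-1} of Lemma~\ref{gradient bound} applies to $\f_h\times\g_h$, giving $\|\nabla_{h,(4)}(\f_h\times\g_h)\|_2 \le \tfrac{2}{\sqrt3}\|\nabla_h(\f_h\times\g_h)\|_2$; squaring produces exactly the factor $\tfrac43$.

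The second inequality is a discrete Leibniz estimate. First I would establish a pointwise product rule for the cross product along each coordinate direction. Working in the $x$-direction and inserting a mixed term,
\[
(\f\times\g)_{i+1}-(\f\times\g)_{i-1} = \f_{i+1}\times(\g_{i+1}-\g_{i-1}) + (\f_{i+1}-\f_{i-1})\times\g_{i-1},
\]
so that $D_x(\f_h\times\g_h)_i = \f_{i+1}\times (D_x\g_h)_i + (D_x\f_h)_i\times\g_{i-1}$. The shifted indices on the undifferentiated factors are harmless because I only need magnitudes: the pointwise bound $|u\times v|\le |u|\,|v|$ gives $|D_x(\f_h\times\g_h)_i| \le \|\f_h\|_\infty\,|(D_x\g_h)_i| + \|\g_h\|_\infty\,|(D_x\f_h)_i|$. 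Squaring, summing over the grid with $(a+b)^2\le 2a^2+2b^2$, and repeating verbatim in the $y$ and $z$ directions yields $\|\nabla_h(\f_h\times\g_h)\|_2^2 \le \mathcal{C}\big(\|\f_h\|_\infty^2\|\nabla_h\g_h\|_2^2 + \|\g_h\|_\infty^2\|\nabla_h\f_h\|_2^2\big)$; absorbing the $\tfrac43$ into $\mathcal{C}$ completes the chain.

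The identity~\eqref{lem27_2} is purely algebraic and holds pointwise. At each grid index $\mathcal{I}\in\Lambda_d$ the summand on the left is the scalar triple product $\big(\f_{\mathcal{I}}\times(\Delta_{h,(4)}\g)_{\mathcal{I}}\big)\cdot\hat{\g}_{\mathcal{I}}$, and the cyclic invariance $(u\times v)\cdot w = (w\times u)\cdot v$ rewrites it as $\big(\hat{\g}_{\mathcal{I}}\times\f_{\mathcal{I}}\big)\cdot(\Delta_{h,(4)}\g)_{\mathcal{I}}$. Multiplying by $h^d$ and summing over $\mathcal{I}$ reproduces the two sides of~\eqref{lem27_2} term by term; no summation by parts and no boundary contributions enter, so this step is immediate.

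The only genuinely delicate point is the second inequality, where the mismatched grid shifts produced by the discrete product rule must be absorbed. Here the use of the $\ell^\infty$ norm on the undifferentiated factor is essential: it bounds $\f_{i+1}$ and $\g_{i-1}$ uniformly irrespective of their offset from node $i$, so the index shift never obstructs the estimate, whereas an $\ell^2$ bound on those factors would not close. The explicit value of $\mathcal{C}$ that emerges (combining the $\tfrac43$ from the long-stencil comparison with the factor $2$ from $(a+b)^2$) is immaterial for the subsequent convergence analysis, so I would not track it precisely.
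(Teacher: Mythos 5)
Your proposal is correct, but note that the paper itself contains no proof of this lemma to compare against: Lemma~\ref{lem27} is quoted from \cite{chen20}, with the proof explicitly deferred to that reference. Your write-up therefore serves as a self-contained substitute, and all three mechanisms are sound. For the first inequality, squaring \eqref{gradient-L2-0-1} does produce exactly the factor $\tfrac43$, and you correctly identify the one hypothesis the lemma statement suppresses, namely that Lemma~\ref{gradient bound} requires the discrete boundary condition \eqref{BC-1}; your observation that $\f_h\times\g_h$ inherits it from its factors is right, though you should verify it for \emph{both} ghost layers (from $\f_{i,j,-1}=\f_{i,j,2}$ and $\g_{i,j,-1}=\g_{i,j,2}$ one gets $(\f_h\times\g_h)_{i,j,-1}=(\f_h\times\g_h)_{i,j,2}$ as well), since the long-stencil operators \eqref{FD-4th-1}--\eqref{FD-4th-2} reach two cells past the boundary. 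The discrete Leibniz splitting with the mixed-term insertion is the standard argument, and your remark that the index shift on the undifferentiated factor is absorbed by the $\ell^\infty$ norm is exactly the point that makes the estimate close; the argument is also robust to whichever difference convention $\nabla_h$ uses, since the same telescoping works for adjacent differences $(\f\times\g)_{i+1}-(\f\times\g)_i$. Finally, reducing \eqref{lem27_2} to the pointwise cyclic invariance of the scalar triple product, $(u\times v)\cdot w=(w\times u)\cdot v$, is correct and appropriately avoids any summation by parts, so no boundary terms arise. No gaps beyond the two-ghost-layer detail above.
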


The following estimate will be used in the error estimate at the projection step; its proof has been provided in a recent article~\cite{chen20}.

\begin{lemma}  \cite{chen20} \label{lem 6-0}
	Consider $\um_h = {\mathcal P}_h \m_e$, in which ${\mathcal P}_h$ stands for the point-wise interpolation of a continuous function over the numerical grid points, the continuous function $\m_e$ satisfies a regularity requirement $\| \m_e \|_{W^{1, \infty}} \le {\mathcal C}$, 
$| \m_e | = 1$ at a point-wise level. 
For any numerical solution $\tilde{\m}_h$, we define $\m_h = \frac{\tilde{\m}_h}{ | \tilde{\m}_h |}$. Suppose both numerical profiles satisfy the following $W_h^{1, \infty}$ bounds
	\begin{align}
	& |\tilde{\m}_h| \ge \frac12 ,  \quad \mbox{at a pointwise level} ,  \label{lem 6-1-0}
	\\
	& \| \m_h \|_{\infty} + \| \nabla_h \m_h \|_\infty \le M ,  \quad \| \tilde{\m}_h \|_{\infty} + \| \nabla_h \tilde{\m}_h \|_\infty \le M,  \label{lem 6-1}
	\end{align}
	and we denote the numerical error functions as $\e_h = \m_h - \um_h$, $\tilde{\e}_h = \tilde{\m}_h - \um_h$. Then the following estimate is valid
	\begin{equation}
	\| \e_h \|_2 \le 2 \| \tilde{\e}_h \|_2  ,  \quad
	\| \nabla_h \e_h \|_2 \le \mathcal{C} ( \| \nabla_h \tilde{\e}_h \|_2
	+ \| \tilde{\e}_h \|_2 )  . \label{lem 6-2}
	\end{equation}
\end{lemma}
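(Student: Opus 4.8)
The plan is to reduce both inequalities to pointwise estimates at each grid index and then sum, since the discrete $\|\cdot\|_2$ and $\|\nabla_h\cdot\|_2$ norms are weighted sums ($h^d$-weighted) of pointwise quantities. The two bounds will be treated by completely different arguments: the $\ell^2$ bound by a direct algebraic inequality, the gradient bound by a discrete mean-value decomposition.

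For the first bound I would work entirely pointwise. Fix a grid index and set $r=|\tilde{\m}_h|\ge\frac12$ and $c=\tilde{\m}_h\cdot\um_h$, recalling $|\um_h|=1$ (since $\um_h=\mathcal{P}_h\m_e$ and $|\m_e|\equiv1$). Because $\m_h=\tilde{\m}_h/|\tilde{\m}_h|$, a direct expansion gives $|\e_h|^2=|\m_h-\um_h|^2=2-2c/r$ and $|\tilde{\e}_h|^2=r^2-2c+1$. The target $|\e_h|^2\le4|\tilde{\e}_h|^2$ is then equivalent to $4r^2-8c+2+2c/r\ge0$. Viewing the left side as an affine function of $c$ with slope $-8+2/r\le-4$ (valid since $r\ge\frac12$), its minimum over the Cauchy--Schwarz range $c\le r$ is attained at $c=r$, where it equals $4(r-1)^2\ge0$. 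Hence $|\e_h|\le2|\tilde{\e}_h|$ at every grid point, and summing with weight $h^d$ yields $\|\e_h\|_2\le2\|\tilde{\e}_h\|_2$.

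For the gradient bound I would exploit that the normalization is a smooth map. Set $F(v)=v/|v|$, which is $C^2$ on the annulus $\{\tfrac14\le|v|\le M+1\}$, with $\|DF(v)\|=1/|v|$ bounded there and $DF$ globally Lipschitz (constant $L$) on that annulus, since $\|D^2F\|$ is bounded and the region is quasiconvex. By hypothesis $\m_h=F(\tilde{\m}_h)$, while $|\um_h|=1$ forces $\um_h=F(\um_h)$. For a forward difference in the $x$-direction, write $\delta a=hD_x^+\tilde{\m}_h$ and $\delta b=hD_x^+\um_h$; the bounds $\|\nabla_h\tilde{\m}_h\|_\infty\le M$ from \eqref{lem 6-1} together with $\|\m_e\|_{W^{1,\infty}}\le\mathcal{C}$ keep neighboring values $O(h)$ apart, so for $h$ small the neighboring segments stay inside $\{|v|\ge\tfrac12-Mh\ge\tfrac14\}$ and I may legitimately use the mean-value matrices $\Psi_a=\int_0^1 DF(\tilde{\m}_h+s\,\delta a)\,ds$ and $\Psi_b=\int_0^1 DF(\um_h+s\,\delta b)\,ds$, giving $hD_x^+\e_h=\Psi_a\delta a-\Psi_b\delta b$.

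The decisive algebraic step is the splitting $\Psi_a\delta a-\Psi_b\delta b=\Psi_a(\delta a-\delta b)+(\Psi_a-\Psi_b)\delta b$. Since $\delta a-\delta b=hD_x^+\tilde{\e}_h$, dividing by $h$ gives $D_x^+\e_h=\Psi_a D_x^+\tilde{\e}_h+(\Psi_a-\Psi_b)D_x^+\um_h$. The first term is controlled by $\|\Psi_a\|\le4$. The crux is the commutator term $(\Psi_a-\Psi_b)D_x^+\um_h$: the Lipschitz bound on $DF$ yields $\|\Psi_a-\Psi_b\|\le L(|\tilde{\e}_h|+h|D_x^+\tilde{\e}_h|)$, and since $|D_x^+\um_h|\le\|\nabla_h\um_h\|_\infty\le\mathcal{C}$, one obtains the pointwise estimate $|D_x^+\e_h|\le\mathcal{C}(|D_x^+\tilde{\e}_h|+|\tilde{\e}_h|)$ for $h\le1$. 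Summing over all indices and over the three coordinate directions (the $y$- and $z$-differences being handled identically) produces $\|\nabla_h\e_h\|_2\le\mathcal{C}(\|\nabla_h\tilde{\e}_h\|_2+\|\tilde{\e}_h\|_2)$. I expect this commutator term to be the \emph{main obstacle}: the apparent $1/h$ created by dividing by $h$ must be absorbed into an $O(1)$ constant via the Lipschitz continuity of $DF$, and this is precisely where the two $W_h^{1,\infty}$ bounds \eqref{lem 6-1} and the lower bound \eqref{lem 6-1-0} (which keep $F$ and $DF$ bounded and smooth, uniformly in $h$) are indispensable; one must also verify that the neighboring segments avoid the origin for small $h$ so that the mean-value representation is valid.
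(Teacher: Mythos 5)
Your proof is essentially correct, and since the paper does not reprove this lemma (it is quoted from \cite{chen20}), the comparison is with the argument there. Your $\ell^2$ part is a genuinely different and sharper route: the usual proof splits $\e_h = \frac{\tilde{\e}_h}{|\tilde{\m}_h|} + \um_h\,\frac{1-|\tilde{\m}_h|}{|\tilde{\m}_h|}$ and uses $\bigl|1-|\tilde{\m}_h|\bigr| \le |\tilde{\e}_h|$, which under \eqref{lem 6-1-0} yields the constant $4$; your quadratic-form computation ($|\e_h|^2 = 2-2c/r$, $|\tilde{\e}_h|^2 = r^2-2c+1$, slope $-8+2/r \le -4$ in $c$, value $4(r-1)^2 \ge 0$ at the Cauchy--Schwarz endpoint $c=r$) is airtight and delivers the stated constant $2$ exactly. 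For the gradient part, the reference argument expands the difference quotient of $\tilde{\m}_h/|\tilde{\m}_h|$ with discrete product/quotient rules, whereas you package the same cancellation into the mean-value matrices $\Psi_a,\Psi_b$ and the commutator splitting $\Psi_a\delta a-\Psi_b\delta b=\Psi_a(\delta a-\delta b)+(\Psi_a-\Psi_b)\delta b$; this is cleaner and makes visible exactly where each hypothesis enters: \eqref{lem 6-1-0} gives $\|DF\|\le 4$ on $\{|v|\ge \tfrac14\}$, the bound $\|\nabla_h\tilde{\m}_h\|_\infty\le M$ from \eqref{lem 6-1} keeps the $\tilde{\m}_h$-segments in that region, and $\|\m_e\|_{W^{1,\infty}}\le\mathcal{C}$ controls $|\delta b|\le \mathcal{C}h$, which is what absorbs the apparent $1/h$ in the commutator term.

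Two loose ends, both patchable. First, your mean-value representation requires $h\le h_0$ with $h_0$ of order $1/M$ (so that the segments $\tilde{\m}_h+s\,\delta a$ and $\um_h+s\,\delta b$ avoid the origin), but the lemma as stated carries no smallness hypothesis on $h$; close this by noting that for $h\ge h_0$ the gradient estimate in \eqref{lem 6-2} follows trivially from the first estimate, since $\|D_x^+\f_h\|_2\le \tfrac{2}{h}\|\f_h\|_2$ gives $\|\nabla_h\e_h\|_2\le \tfrac{2}{h_0}\|\e_h\|_2\le \tfrac{4}{h_0}\|\tilde{\e}_h\|_2$, which is of the admitted form because $\mathcal{C}$ may depend on $M$. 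Second, your Lipschitz claim for $DF$ should be justified on the set $\{|v|\ge\tfrac14\}\subset\mathbb{R}^3$ alone: this set is quasiconvex (constant at most $\pi/2$) and $\|D^2F(v)\|\le \mathcal{C}|v|^{-2}$ is bounded there, so no outer radius is needed; this also repairs the cosmetic mismatch between your stated annulus $\{\tfrac14\le|v|\le M+1\}$ and the actual range $|v|\le 2M$ of the segment points. With these remarks incorporated, the proof is complete.
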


In addition, to establish the optimal rate convergence analysis, we have to recall the telescope formula in \cite{JLiu2013} for the third order BDF temporal discretization operator in the following lemma; also see~\cite{Hao2020, yao17} for the related discussion. 

\begin{lemma} \label{lem:3rd order BDF}
For the third order BDF temporal discretization operator, there exists $\alpha_i$, $i=1,\cdots,10$, $\alpha_1 \ne 0$, such that
\begin{eqnarray} \label{BDF-3-est-0}
&& \langle \frac{11}{6} e^{n+1} - 3 e^n + \frac32 e^{n-1} - \frac13 e^{n-2} ,
 2 e^{n+1} - e^n \rangle \\
&=&  \| \alpha_1 e^{n+1} \|_2^2 - \| \alpha_1 e^n \|_2^2
 + \| \alpha_2 e^{n+1} + \alpha_3 e^n \|_2^2
 - \| \alpha_2 e^n + \alpha_3 e^{n-1} \|_2 ^2 \nonumber
\\
  &&
  + \| \alpha_4 e^{n+1} + \alpha_5 e^n + \alpha_6 e^{n-1} \|_2^2
 - \| \alpha_4 e^n + \alpha_5 e^{n-1} + \alpha_6 e^{n-2} \|_2^2 \nonumber
\\
  &&+ \| \alpha_7 e^{n+1} + \alpha_8 e^n + \alpha_9 e^{n-1}
   + \alpha_{10} e^{n-2} \|_2^2 . \nonumber
\end{eqnarray}
\end{lemma}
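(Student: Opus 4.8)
The plan is to prove~\eqref{BDF-3-est-0} as a purely algebraic identity of quadratic forms, valid in any real inner-product space (following~\cite{JLiu2013}); since only the existence of the $\alpha_i$ with $\alpha_1\ne0$ is needed later and not their explicit values, the entire task is to exhibit one admissible choice.

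First I would regard $e^{n+1},e^n,e^{n-1},e^{n-2}$ as four abstract vectors and $\langle\cdot,\cdot\rangle$ as a symmetric bilinear form, so that both sides of~\eqref{BDF-3-est-0} are quadratic forms in these four vectors and the identity holds precisely when their symmetric coefficient matrices coincide. Writing the left-hand side as $\big\langle \sum_{i=0}^{3} c_i e^{n+1-i}, \sum_{j=0}^{3} d_j e^{n+1-j}\big\rangle$ with $(c_0,c_1,c_2,c_3)=(\tfrac{11}{6},-3,\tfrac32,-\tfrac13)$ and $(d_0,d_1,d_2,d_3)=(2,-1,0,0)$, its coefficient matrix is the fixed symmetric $4\times4$ matrix with entries $\tfrac12(c_i d_j + c_j d_i)$. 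The identity then amounts to reproducing exactly this matrix from the right-hand side.

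Next I would exploit the shift structure of the right-hand side. The first three groups combine into a telescoping difference $\mathcal{E}^{n+1}-\mathcal{E}^n$, where $\mathcal{E}^{n+1}:=\|\alpha_1 e^{n+1}\|_2^2 + \|\alpha_2 e^{n+1}+\alpha_3 e^n\|_2^2 + \|\alpha_4 e^{n+1}+\alpha_5 e^n+\alpha_6 e^{n-1}\|_2^2$ is a quadratic energy on the three-step window $(e^{n+1},e^n,e^{n-1})$ and $\mathcal{E}^n$ is the same form evaluated on the shifted window $(e^n,e^{n-1},e^{n-2})$. Thus $\mathcal{E}$ is a G-stability window norm $\|\cdot\|_G^2$ for a symmetric $3\times3$ matrix $G$, already presented in staircase (Cholesky) form as a sum of squares, which is exactly why the coefficients $\alpha_1$; $\alpha_2,\alpha_3$; $\alpha_4,\alpha_5,\alpha_6$ have triangular support. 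The last group $\|\alpha_7 e^{n+1}+\alpha_8 e^n+\alpha_9 e^{n-1}+\alpha_{10}e^{n-2}\|_2^2$ is the nonnegative remainder. Matching coefficient matrices reduces the whole lemma to finding a symmetric positive semidefinite $G$ for which the left-hand side minus the telescoping window difference $\mathcal{E}^{n+1}-\mathcal{E}^n$ is a single perfect square.

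The crux is positivity. The existence of such a $G$, positive definite and hence admitting a real Cholesky factorization into the staircase squares above, is exactly the G-stability of the third-order BDF method paired with the Nevanlinna--Odeh multiplier $2e^{n+1}-e^n$ (Dahlquist's equivalence theorem); without it the $\alpha_i$ could be forced complex. The two delicate checks are therefore (i) the positive definiteness of $G$, guaranteeing real $\alpha_1,\dots,\alpha_6$ with $\alpha_1\ne0$, and (ii) that the leftover four-vector form is genuinely rank one, so that it can be written as the single square $\|\alpha_7 e^{n+1}+\cdots+\alpha_{10}e^{n-2}\|_2^2$ rather than a general sum of squares, yielding $\alpha_7,\dots,\alpha_{10}$. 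Both would be verified by an explicit computation with the BDF3 data, after which the staircase Cholesky factorization of $G$ supplies the remaining coefficients and completes the proof of~\eqref{BDF-3-est-0}.
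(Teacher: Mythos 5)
You should first note what the paper actually does with this statement: it gives no proof at all, but recalls the identity from \cite{JLiu2013}, where (as in the follow-up works cited alongside it) the decomposition is established by exhibiting explicit numerical values of $\alpha_1,\dots,\alpha_{10}$ and verifying the coefficient-matching equations directly. Your proposal is therefore a genuinely different, more structural route: recast both sides as quadratic forms in the four vectors $e^{n+1},e^n,e^{n-1},e^{n-2}$, recognize the first three groups of squares as a telescoping difference of a $G$-norm on a three-step window presented in staircase (reverse-Cholesky) form, and reduce everything to finding a positive semidefinite $G$ whose window difference leaves a rank-one remainder. This is exactly the Dahlquist--Nevanlinna--Odeh machinery, and what it buys over the cited source is an explanation of \emph{why} the identity has this shape (why the triangular support of the coefficients, why a single extra square suffices), at the cost of deferring the one decisive computation.

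Two points need tightening before your plan closes. First, the appeal to ``Dahlquist's equivalence theorem'' is off target: BDF3 is not A-stable, so it is not G-stable in Dahlquist's sense; the correct tool is Dahlquist's positive-function lemma applied to the pair $(\rho,\mu)$ with $\rho$ the BDF3 symbol and $\mu$ the multiplier symbol, whose hypothesis is $\operatorname{Re}\big[\rho(z)\overline{\mu(z)}\big]\ge 0$ on $|z|=1$. Moreover this does \emph{not} follow from the Nevanlinna--Odeh value $\eta_3\approx 0.0836$, since admissibility of the multiplier $e^{n+1}-\eta e^n$ is not monotone in $\eta$ and here $\eta=\tfrac12$; so the check you defer to ``explicit computation'' is the actual crux. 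It does succeed: with $z=e^{i\theta}$, $x=\cos\theta$,
\begin{equation*}
\operatorname{Re}\Big[\big(\tfrac{11}{6}-3z^{-1}+\tfrac32 z^{-2}-\tfrac13 z^{-3}\big)\,\overline{\big(2-z^{-1}\big)}\Big]
= \tfrac23\,(1-x)\big(4x^2-6x+5\big)\;\ge\; 0 ,
\end{equation*}
since $4x^2-6x+5$ has negative discriminant. Second, your worry about the remainder being genuinely rank one is resolved without luck: by Fej\'er--Riesz, the nonnegative trigonometric polynomial above equals $|\pi(z)|^2$ for a single real cubic $\pi$, which supplies $(\alpha_7,\dots,\alpha_{10})$, while the staircase factorization of the resulting positive definite $G$ (a reverse Cholesky) yields real $\alpha_1,\dots,\alpha_6$ with $\alpha_1\neq 0$. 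With these two corrections your argument is complete and, in my view, more illuminating than the explicit-coefficient verification behind the paper's citation; as written, however, the positivity check is announced rather than performed, and that is the one substantive gap.
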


\section{The optimal rate convergence analysis: Proof of Theorem~\ref{cccthm2}}\label{Sec:proof} 

\begin{proof}
	First, denote $\underline{\m} = \m_e$. Subsequently, we extend the profile $\underline{\m}$ to the numerical ``ghost" points, according to the extrapolation formula~\cref{BC-1}:
	\begin{equation}
	\underline{\m}_{i,j,0}= \underline{\m}_{i,j,1} , \quad
	\underline{\m}_{i,j,N_z+1} = \underline{\m}_{i,j,N_z} ,  \label{exact-3}
	\end{equation}
and the extrapolation for other boundaries can be formulated in the same manner. Next, we prove that such an extrapolation yields a higher order $\mathcal{O}(h^5)$ approximation, instead of the standard $\mathcal{O}(h^3)$ accuracy. Also see the related works~\cite{STWW2003, Wang2000, Wang2004} in the existing literature. 


	Performing a careful Taylor expansion for the exact solution around the boundary section $z=0$, combined with the mesh point values: $\hat{z}_0 = - \frac12 h$, $\hat{z}_1 = \frac12 h$, we get
	\begin{align}
	\m_e  (\hat{x}_i, \hat{y}_j, \hat{z}_0 )
	&= \m_e (\hat{x}_i, \hat{y}_j, \hat{z}_1 )
	- h \partial_z \m_e (\hat{x}_i, \hat{y}_j, 0 )
	- \frac{h^3}{24} \partial_z^3 \m_e (\hat{x}_i, \hat{y}_j, 0 )
	+   \mathcal{O}(h^5) \nonumber
	\\
	&= \m_e (\hat{x}_i, \hat{y}_j, \hat{z}_1 )
	- \frac{h^3}{24} \partial_z^3 \m_e (\hat{x}_i, \hat{y}_j, 0 )
	+   \mathcal{O}(h^5) ,  \label{exact-4}
	\end{align}
	in which the homogenous boundary condition has been applied in the second step. It remains to determine $\partial_z^3 \partial_z \m_e (\hat{x}_i, \hat{y}_j, 0 )$, for which we use information from the rewritten PDE~\eqref{equation-LL-alt} and its derivatives. Applying $\partial_z$ to the first evolutionary equation in~\eqref{equation-LL-alt} along the boundary section $\Gamma_z: z=0$ gives
\begin{eqnarray}
\begin{aligned} 
  & 
  (m_1)_{zt} 
  - 2 \alpha ( m_1 ( \nabla m_1 \cdot \nabla (m_1)_z  + \nabla m_2 \cdot \nabla (m_2)_z
  + \nabla m_3 \cdot \nabla (m_3)_z )  ) 
\\
  & 
  - \alpha | \nabla \m_e |^2 (m_1)_z  
   - \alpha ( (m_1)_{zxx} + (m_1)_{zyy} + \partial_z^3 m_1 )  
\\
  =& 
  ( m_3 )_z \Delta m_2  
  + m_3 ( (m_2)_{zxx} + (m_2)_{zyy} + \partial_z^3 m_2 )      
\\
  &  
  -  ( m_2 )_z \Delta m_3 
  - m_2  ( (m_3)_{zxx} + (m_3)_{zyy} + \partial_z^3 m_3 )  , \quad
 \mbox{on} \, \, \Gamma_z .  
\end{aligned} 
 \label{scheme-BC-2}
\end{eqnarray}
The first, third terms, and the first two parts in the fourth term on the left-hand side of~\eqref{scheme-BC-2} disappear, due to the homogeneous Neumann boundary condition for $m_1$. For the second term on the left hand side, we observe that
\begin{equation}
   \nabla m_1 \cdot \nabla (m_1)_z  = (m_1)_x \cdot (m_1)_{zx} + (m_1)_y \cdot (m_1)_{zy} 
   + (m_1)_z \cdot (m_1)_{zz}
   = 0 ,   \quad \mbox{on} \, \, \Gamma_z ,  \label{scheme-BC-3}
\end{equation}
since $(m_1)_z =0$ on the boundary section. Similar derivations could be made to the two other terms on the left hand side:
\begin{equation}
   \nabla m_2 \cdot \nabla (m_2)_z  = 0 ,  \, \, \, \nabla m_3 \cdot \nabla (m_3)_z  = 0 ,
   \quad \mbox{on} \, \, \Gamma_z .  \label{scheme-BC-4}
\end{equation}
Meanwhile, on the right hand side of~\eqref{scheme-BC-2}, we see that the first and third terms, as well as the first two parts in the second and fourth terms, disappear, which comes from the homogeneous Neumann boundary condition for $m_2$ and $m_3$. Then we arrive at 
\begin{equation} 
  \alpha  \partial_z^3 m_1  =  - m_3  \partial_z^3 m_2  + m_2  \partial_z^3 m_3 , 
  \quad \mbox{on} \, \, \Gamma_z .  \label{scheme-BC-5-1}
\end{equation} 
Similarly, we are able to derive the following equalities; 
\begin{equation} 
\begin{aligned} 
  & 
  \alpha  \partial_z^3 m_2  =  m_1  \partial_z^3 m_3  - m_3  \partial_z^3 m_1 ,  
  \quad \mbox{on} \, \, \Gamma_z  
\\
 & 
  \alpha  \partial_z^3 m_3  =  m_2  \partial_z^3 m_1  - m_1  \partial_z^3 m_2 , 
  \quad \mbox{on} \, \, \Gamma_z .  
\end{aligned} 
\label{scheme-BC-5-2}
\end{equation} 
In turn, for any $\alpha >0$, we observe that the matrix $\left( \begin{array}{ccc} 
 \alpha & m_3    & - m_2 \\
 - m_3  & \alpha  & m_1  \\ 
 m_2     & - m_1  & \alpha 
\end{array} \right)$ has a positive determinant, so that the linear system~\eqref{scheme-BC-5-1}-\eqref{scheme-BC-5-2} has only trivial solution: 
\begin{equation} 
  \partial_z^3 m_1  =  \partial_z^3 m_2 = \partial_z^3 m_3 = 0  , 
  \quad \mbox{on} \, \, \Gamma_z .  \label{scheme-BC-5-3}
\end{equation} 
As a result, a substitution into~\eqref{exact-4} yields an ${\mathcal O} (h^5)$ consistency accuracy for the symmetric extrapolation: 
\begin{equation} 
\begin{aligned} 
        & 
	\m_e  (\hat{x}_i, \hat{y}_j, \hat{z}_0 )
	= \m_e (\hat{x}_i, \hat{y}_j, \hat{z}_1 )
	+   \mathcal{O}(h^5) , 
\\
  & 
	\underline{\m}  (\hat{x}_i, \hat{y}_j, \hat{z}_0 )
	= \underline{\m} (\hat{x}_i, \hat{y}_j, \hat{z}_1 )
	+   \mathcal{O}(h^5) .  
\end{aligned} 
   \label{exact-5}
\end{equation}
In other words, the extrapolation formula~\cref{exact-3} is indeed $\mathcal{O}(h^5)$ accurate.
	
Subsequently, a detailed calculation of Taylor expansion, in both time and space, leads to the following truncation error estimate:
\begin{equation} 
	\begin{aligned} 
	  & 
	\frac{\frac{11}{6} \underline{\m}_h^{n+3} - 3 \underline{\m}_h^{n+2}
		+ \frac32 \underline{\m}_h^{n+1} - \frac13 \underline{\m}_h^n }{k} 
\\
		= & - \hat{\underline{\m}}_h^{n+3} 
		\times \Delta_{h, (4)} \underline{\m}_h^{n+3} + \tau^{n+3} 
	  + \alpha \Delta_{h, (4)} \underline{\m}_h^{n+3} 
	+ \alpha | \tilde{\nabla}_{h, (4)} \hat{\underline{\m}}_h^{n+3} |^2 
	\hat{\underline{\m}}_h^{n+3} , 
	\end{aligned} 
	\label{consistency-2}	 
\end{equation} 	
with $\hat{\underline{\m}}_h^{n+3} = 3 \underline{\m}_h^{n+2} - 3 \underline{\m}_h^{n+1} + \underline{\m}_h^n$, and $\| \tau^{n+3} \|_2 \le \mathcal{C} (k^3+h^4)$. In addition, a higher order Taylor expansion in space and time reveals the following estimate for the discrete gradient of the truncation error:  
\begin{equation} 
  \| \nabla _h \tau^{n+1} \|_2 \le {\mathcal C} (k^3 + h^4) .   \label{truncation error-1}
\end{equation}
In fact, such a discrete $\| \cdot \|_{H_h^1}$ bound for the truncation comes from the regularity assumption for the exact solution, $\m_e \in C^4 ([0,T]; C^0) \cap C^3 ([0, T]; C^1) \cap L^{\infty}([0,T]; C^6)$, as stated in Theorem~\ref{cccthm2}. 	
	
In turn, we introduce the numerical error functions $\tilde{\e}_h^n=\underline{\m}_h^n-\tilde{\m}_h^n$, ${\e}_h^n=\underline{\m}_h^n-\m_h^n$, at a point-wise level. A subtraction of \cref{scheme-1-1}-\cref{scheme-1-2} from the consistency estimate~\cref{consistency-2} leads to
	the error function evolution system:
\begin{equation} 
	\begin{aligned} 
	  & 
	\frac{\frac{11}{6} \tilde{\e}_h^{n+3} - 3 \tilde{\e}_h^{n+2} + \frac32 \tilde{\e}_h^{n+1} 
	- \frac13 \tilde{\e}_h^n }{k}  
\\
   = & 
	- \hat{\m}_h^{n+3} \times \Delta_{h, (4)} \hat{\tilde{\e}}_h^{n+3} 
	- \hat{\e}_h^{n+3} \times \Delta_{h, (4)} \hat{\um}_h^{n+3} 
	+ \tau^{n+3}  	
	+ \alpha \Delta_{h, (4)} \tilde{\e}_h^{n+3}  
\\
  & 	
	+ \alpha |  \tilde{\nabla}_{h, (4)} \hat{\underline{\m}}_h^{n+3} |^2  
	\hat{\e}_h^{n+3} 
	+ \alpha \Big( 
	\tilde{\nabla}_{h, (4)}  ( \hat{\underline{\m}}_h^{n+3} + \hat{\m}_h^{n+3} ) 
	\cdot  \tilde{\nabla}_{h, (4)}  \hat{\e}_h^{n+3}  \Big) 		 
	\hat{\m}_h^{n+3}  , 
	\end{aligned} 
	\label{ccc73} 
\end{equation} 	
with $\hat{\e}_h^{n+3} = 3 \e_h^{n+2} - 3 \e_h^{n+1} + \e_h^n$, $\hat{\tilde{\e}}_h^{n+3} = 3 \tilde{\e}_h^{n+2} - 3 \tilde{\e}_h^{n+1} + \tilde{\e}_h^n$. 
	
	Before we proceed into the formal error estimate, we establish the bound for the exact solution ${\um}$ and the numerical solution $\m_h$. For the profile $\um \in L^{\infty}([0,T]; C^6)$, which turns out to be the exact solution, we still use $\mathcal{C}$ to denote its bound:
	\begin{align}
	\|\nabla_h^r \um_h \|_{\infty}\leq \mathcal{C}, \quad r=0,1,2,3 ,  \label{bound-1}
	\end{align}
in which $\um_h = {\mathcal P}_h \um$, the point-wise interpolation of the continuous function $\um$. In addition, we make the following \textit{a priori} assumption for the numerical error function:
	\begin{equation} \label{bound-2}
	\|{\e}_h^k \|_{\infty} \le \dt + h , \, \, \,  \|\nabla_h{\e}_h^k \|_{\infty} \le \frac13 ,  \, \, \,
	\| \tilde{\e}_h^k \|_{\infty}+\|\nabla_h \tilde{\e}_h^k \|_{\infty} \le \frac13 ,
	\, \, \,  \mbox{$\ell \le k \le \ell+2$} .
	\end{equation}
	Such an assumption will be recovered by the convergence analysis at time step $t^{\ell+3}$. In turn, an application of triangle inequality yields the desired $W_h^{1,\infty}$ bound for the numerical solutions $\m_h$ and $\tilde{\m}_h$:
	\begin{align}
	\|{\m}_h^k \|_{\infty} &= \|\um_h^k -{\e}_h^k \|_{\infty}\leq\|\um_h^k \|_{\infty}+\|{\e}_h^k \|_{\infty}\leq \mathcal{C}+ \frac13 ,  \label{bound-3} \\
	\|\nabla_h{\m}_h^k \|_{\infty} &= \|\nabla_h\um_h^k -\nabla_h{\e}_h^k \|_{\infty}\leq\|\nabla_h\um_h^k \|_{\infty}+\|\nabla_h{\e}_h^k \|_{\infty}\leq \mathcal{C}+\frac13 , \nonumber \\
	\| \tilde{\m}_h^k \|_{\infty} & \leq \mathcal{C}+ \frac13 , \quad
	\| \nabla_h \tilde{\m}_h^k \|_{\infty} \leq \mathcal{C}+ \frac13
	\quad \mbox{(similar derivation)} . \label{bound-4}  			
	\end{align}
Furthermore, we need a sharper $\| \cdot \|_\infty$ bound for $\hat{\m}_h^{\ell+3} = 3 \m_h^{\ell+2} - 3 \m_h^{\ell +1} + \m_h^\ell$, which will be needed in the later error analysis. The following extrapolation estimate is valid, due to the $C^4 (0,T; C^0)$ regularity of the exact solution $\m_e$: 
\begin{equation} 
  \um_h^{\ell+3} = 3 \um_h^{\ell+2} - 3 \um_h^{\ell+1} + \um_\ell + {\mathcal O} (\dt^3) .  
  \label{bound-5-1} 
\end{equation} 
Meanwhile, since $| \m_e | \equiv 1$, we conclude that 
\begin{equation} 
  \| 3 \um_h^{\ell+2} - 3 \um_h^{\ell+1} + \um_\ell \|_\infty  \le 1 + {\mathcal C} \dt^3 .  \label{bound-5-2} 
\end{equation}  
As a result, its combination with the a-priori assumption that $\|{\e}_h^k \|_{\infty} \le \dt + h$, for $k=\ell, \ell+1, \ell+2$, (as given by~\eqref{bound-2}), implies that 
\begin{equation} 
\begin{aligned} 
  \| \hat{\m}_h^{\ell+3} \|_\infty = & \| 3 \m_h^{\ell+2} - 3 \m_h^{\ell+1}  + \m_h^\ell \|_\infty   
\\
  \le & 
     \| 3 \um_h^{\ell+2} - 3  \um_h^{\ell +1} + \um_h^\ell \|_\infty  
  + \| 3 \e_h^{\ell+2} - 3 \e_h^{\ell +1} + \e_h^\ell \|_\infty    
\\
  \le & 1 + {\mathcal C} \dt^3 + 7 (\dt + h)  \le \beta_1 := 1 + \frac{\alpha - 7}{14}  , 
\end{aligned} 
\label{bound-5-3}  
\end{equation}   
provided that $\dt$ and $h$ are sufficiently small. In addition, we denote $\gamma_0 := \alpha- 7 >0$, so that $\beta_1 = 1 + \frac{1}{14} \gamma_0$. 
	
	Then we perform a discrete $L^2$ error estimate at $t^{\ell+3}$ using the mathematical induction. 
	Motivated by the telescope formula~\eqref{BDF-3-est-0} (for the BDF3 temporal stencil)  in Lemma~\ref{lem:3rd order BDF}, we take a discrete inner product with the numerical error equation \cref{ccc73} by $2 \tilde{\e}_h^{\ell+3} - \tilde{\e}_h^{\ell+2}$ and obtain 
	\begin{align}\label{rhs}
	R.H.S.&= \left\langle-\left( 3 {\m}_h^{\ell+2} - 3 \m_h^{\ell+1} + \m_h^\ell \right) 
	\times \Delta_{h, (4)} \hat{\tilde{\e}}_h^{\ell+3}, 2 \tilde{\e}_h^{\ell+3} - \tilde{\e}_h^{\ell+2} \right\rangle\\
	&-\left\langle \hat{{\e}}_h^{\ell+3} \times \Delta_{h, (4)} \hat{\um}_h^{\ell+3}, 
	2 \tilde{\e}_h^{\ell+3} - \tilde{\e}_h^{\ell+2} \right\rangle 
	+ \left\langle \tau^{\ell+3}, 2 \tilde{\e}_h^{\ell+3} - \tilde{\e}_h^{\ell+2} \right\rangle 
	\nonumber\\
	& - \alpha \langle \nabla_{h, (4)} \tilde{\e}_h^{\ell+3} , 
	\nabla_{h, (4)} ( 2 \tilde{\e}_h^{\ell+3} - \tilde{\e}_h^{\ell+2} ) \rangle \nonumber \\ 
	& 
	+ \alpha \langle | \tilde{\nabla}_{h, (4)} \hat{\underline{\m}}_h^{\ell+3} |^2 
	\hat{\e}_h^{\ell+3} , 2 \tilde{\e}_h^{\ell+3} - \tilde{\e}_h^{\ell+2} \rangle  \nonumber\\
	& + \alpha \Big\langle \Big( 
	\tilde{\nabla}_{h, (4)}  ( \hat{\underline{\m}}_h^{\ell+3} + \hat{\m}_h^{\ell+3} ) 
	\cdot \tilde{\nabla}_{h, (4)} \hat{\e}_h^{\ell+3}  \Big) 		 
	\hat{\m}_h^{\ell+3} ,  2 \tilde{\e}_h^{\ell+3} - \tilde{\e}_h^{\ell+2} \Big\rangle  \nonumber  \\
	&=: \tilde{I}_1+\tilde{I}_2+\tilde{I}_3 + \tilde{I}_4   
	+\tilde{I}_5+\tilde{I}_6.\nonumber
	\end{align}
	
	\begin{itemize}
		\item Estimate of $\tilde{I}_1$: A combination of the summation by parts formula~\cref{sum1} (notice that the numerical error function $\tilde{\e}$ satisfies the homogeneous Neumann boundary condition~\cref{BC-1}) and inequality~\cref{lem27_1} results in 	
		\begin{align}\label{I1}
		\tilde{I}_1 = \, & \left\langle-\left( 3 {\m}_h^{\ell+2} - 3 \m_h^{\ell+1} + \m_h^\ell \right) 
	\times \Delta_{h, (4)} \hat{\tilde{\e}}_h^{\ell+3}, 
	2 \tilde{\e}_h^{\ell+3} - \tilde{\e}_h^{\ell+2} \right\rangle \\
		= \, & \left\langle ( 2 \tilde{\e}_h^{\ell+3} - \tilde{\e}_h^{\ell+2} ) 
		\times \left( 3 {\m}_h^{\ell+2} - 3 \m_h^{\ell+1} + \m_h^\ell \right), 
		-\Delta_{h, (4)} \hat{\tilde{\e}}_h^{\ell+3} \right\rangle \nonumber\\
		= \, & \left\langle \nabla_{h, (4)} \Big[  
		( 2 \tilde{\e}_h^{\ell+3} - \tilde{\e}_h^{\ell+2} ) 
		\times \left( 3 \m_h^{\ell+2} - 3 \m_h^{\ell+1} + \m_h^\ell \right)	 \Big] , 
		\nabla_{h, (4)} \hat{\tilde{\e}}_h^{\ell+3}  \right\rangle \nonumber\\
		\le \, & \mathcal{C}\Big(\|\nabla_{h, (4)} \hat{\tilde{\e}}_h^{\ell+3}\|_2^2 
		+ \|\nabla_h ( 2 \tilde{\e}_h^{\ell+3} - \tilde{\e}_h^{\ell+2} ) \|_2^2 
		\cdot \| 3 \m_h^{\ell+2} - 3 \m_h^{\ell+1} + \m_h^\ell \|_{\infty}^2 \nonumber \\
		& +\| 2 \tilde{\e}_h^{\ell+3} - \tilde{\e}_h^{\ell+2} \|_2^2 
		\cdot \|\nabla_h ( 3 \m_h^{\ell+2} - 3 \m_h^{\ell+1} 
		+ \m_h^\ell )\|_{\infty}^2 \Big) \nonumber\\
		\le \, & \mathcal{C} ( \|\nabla_h\tilde{\e}_h^{\ell+3}\|_2^2  
		+ \|\nabla_h\tilde{\e}_h^{\ell+2} \|_2^2	
		+ \| \nabla_h\tilde{\e}_h^{\ell+1} \|_2^2  
		+ \| \nabla_h\tilde{\e}_h^{\ell} \|_2^2	 \nonumber \\
		& \quad 	
		+ \| \tilde{\e}_h^{\ell+3} \|_2^2 			
		+ \| \tilde{\e}_h^{\ell+2} \|_2^2	 ) , \nonumber
		\end{align}
in which the priori-bound~\eqref{bound-3} and the preliminary estimate~\eqref{gradient-L2-0-1} have also been applied. 
		
		\item Estimate of $\tilde{I}_2$:
		\begin{align}\label{I2}
		\tilde{I}_2 =\, & -\left\langle \hat{{\e}}_h^{\ell+3} \times \Delta_{h, (4)} \hat{\um}_h^{\ell+3}, 
	2 \tilde{\e}_h^{\ell+3} - \tilde{\e}_h^{\ell+2} \right\rangle  \\
		\le \, & \frac{1}{2}\big[ \| 2 \tilde{\e}_h^{\ell+3} - \tilde{\e}_h^{\ell+2} \|_2^2 
		+ \| 3 \e_h^{\ell+2} - 3 \e_h^{\ell +1} + \e_h^\ell \|_2^2 
		\cdot \| \Delta_{h, (4)} \hat{\um}_h^{\ell+3 }\|_{\infty}^2 \big] \nonumber\\
		\le \, & \mathcal{C} ( \|\tilde{\e}_h^{\ell+3}\|_2^2 + \|\tilde{\e}_h^{\ell+2}\|_2^2 
		+ \| \e_h^{\ell+2}\|_2^2 + \| \e_h^{\ell+1}\|_2^2
		+ \| \e_h^{\ell}\|_2^2 ) , \nonumber
		\end{align}
in which the bound for $\|\Delta_{h, (4)} \hat{\um}_h^{\ell+3}\|_{\infty}$ is given by the preliminary estimate~\eqref{bound-1}, with $r=2$.

		\item Estimate of the truncation error term $\tilde{I}_3$: An application of Cauchy inequality gives
		\begin{align}\label{I3}
		\tilde{I}_3 = \left\langle \tau^{\ell+3} , 
		2 \tilde{\e}_h^{\ell+3} - \tilde{\e}_h^{\ell+2} \right\rangle
		 \leq \mathcal{C} ( \| \tilde{\e}_h^{\ell+3} \|_2^2 
		 + \| \tilde{\e}_h^{\ell+2} \|_2^2 ) + \mathcal{C} (k^6 + h^8) .
		\end{align}
		
		\item Estimate of $\tilde{I}_4$: The following equalities are available: 
		\begin{align} \label{I4-1}  
		  & 
		\langle \nabla_{h, (4)} \tilde{\e}_h^{\ell+3} , 
	\nabla_{h, (4)} ( \tilde{\e}_h^{\ell+3} - \tilde{\e}_h^{\ell+2} ) \rangle  \\ 
	         = \, & \frac12 ( \| \nabla_{h, (4)} \tilde{\e}_h^{\ell+3} \|_2^2  
	          - \| \nabla_{h, (4)} \tilde{\e}_h^{\ell+2} \|_2^2   
	        + \| \nabla_{h, (4)} ( \tilde{\e}_h^{\ell+3} - \tilde{\e}_h^{\ell+2} ) \|_2^2  )  ,   \nonumber 
		\end{align}		
		
		\begin{align} \label{I4}
		\tilde{I}_4 = \, &  - \alpha \langle \nabla_{h, (4)} \tilde{\e}_h^{\ell+3} , 
	\nabla_{h, (4)} ( 2 \tilde{\e}_h^{\ell+3} - \tilde{\e}_h^{\ell+2} ) \rangle  \\  
	       = \, &  - \alpha \| \nabla_{h, (4)} \tilde{\e}_h^{\ell+3} \|_2^2 
	       - \alpha  \langle \nabla_{h, (4)} \tilde{\e}_h^{\ell+3} , 
	\nabla_{h, (4)} ( \tilde{\e}_h^{\ell+3} - \tilde{\e}_h^{\ell+2} ) \rangle  \nonumber \\
	       = \, & - \alpha \| \nabla_{h, (4)} \tilde{\e}_h^{\ell+3} \|_2^2  
	        - \frac{\alpha}{2} ( \| \nabla_{h, (4)} \tilde{\e}_h^{\ell+3} \|_2^2  
	          - \| \nabla_{h, (4)} \tilde{\e}_h^{\ell+2} \|_2^2  )   \nonumber \\
	          \, & 
	         - \frac{\alpha}{2} \| \nabla_{h, (4)} ( \tilde{\e}_h^{\ell+3} - \tilde{\e}_h^{\ell+2} ) \|_2^2 .  	        \nonumber  	       
		\end{align}
				
		\item Estimate of $\tilde{I}_5$: An application of discrete H\"older inequality gives 
          	\begin{align}\label{I5-1}
 		\Big\| | \tilde{\nabla}_{h, (4)} \hat{\underline{\m}}_h^{\ell+3} |^2 
 	\hat{\e}_h^{\ell+3} \Big\|_2 
 		\le \, & \| \tilde{\nabla}_{h, (4)} \hat{\underline{\m}}_h^{\ell+3} \|_\infty^2 
 		\cdot  \| \hat{\e}_h^{\ell+3} \|_2     \\
 		\le \, &  {\mathcal C}  \| \hat{\e}_h^{\ell+3} \|_2  , \nonumber 
         	\end{align}
in which the $W_h^{1, \infty}$ bound~\eqref{bound-1} for the exact solution has been applied. 
				
 	\begin{align}\label{I5}
 		\tilde{I}_5 = \,& \alpha \langle | \tilde{\nabla}_{h, (4)} \hat{\underline{\m}}_h^{\ell+3} |^2 
	\hat{\e}_h^{\ell+3} , 2 \tilde{\e}_h^{\ell+3} - \tilde{\e}_h^{\ell+2} \rangle  \\
		\le \, & \alpha \Big\| | \tilde{\nabla}_{h, (4)} \hat{\underline{\m}}_h^{\ell+3} |^2 
 	\hat{\e}_h^{\ell+3} \Big\|_2  
	\cdot \| 2 \tilde{\e}_h^{\ell+3} - \tilde{\e}_h^{\ell+2} \|_2  \nonumber \\
		\le \, &  {\mathcal C} \alpha  \| \hat{\e}_h^{\ell+3} \|_2  
		\cdot \|  2 \tilde{\e}_h^{\ell+3} - \tilde{\e}_h^{\ell+2}  \|_2 \nonumber\\ 
		\le \, & {\mathcal C} (  \| \e_h^{\ell+2}\|_2^2 + \| \e_h^{\ell+1}\|_2^2 
		+ \| \e_h^{\ell} \|_2^2	+ \|\tilde{\e}_h^{\ell+3} \|_2^2 			
		+ \|\tilde{\e}_h^{\ell+2}\|_2^2 ) . \nonumber
 	\end{align}
		
		\item Estimate $\tilde{I}_6$: Similarly, an application of discrete H\"older inequality gives 
          	\begin{align}\label{I6-1}
	        & 
 		\Big\| \Big( 
	 \tilde{\nabla}_{h, (4)}  ( \hat{\underline{\m}}_h^{\ell+3} + \hat{\m}_h^{\ell+3} ) 
	\cdot  \tilde{\nabla}_{h, (4)}  \hat{\e}_h^{\ell+3}  \Big) 		 
	\hat{\m}_h^{\ell+3} \Big\|_2  \\ 
 		\le \, & \Big(  \| \tilde{\nabla}_{h, (4)}  \hat{\underline{\m}}_h^{\ell+3} \|_\infty) 
		+ \| \tilde{\nabla}_{h, (4)} \hat{\m}_h^{\ell+3} \|_\infty \Big) \cdot  
	 \| \tilde{\nabla}_{h, (4)}  \hat{\e}_h^{\ell+3} \|_2  \cdot \| \hat{\m}_h^{\ell+3} \|_\infty  
	  \nonumber \\ 
	  \le \, & {\mathcal C}  \| \nabla_{h, (4)}  \hat{\e}_h^{\ell+3} \|_2 
	   \le {\mathcal C}  \| \nabla_h \hat{\e}_h^{\ell+3} \|_2	 , \nonumber
         	\end{align}
in which the $W_h^{1, \infty}$ bounds~\eqref{bound-1}, \eqref{bound-3} (for the exact and numerical solutions), as well as the preliminary estimate~\eqref{gradient-L2-0-1}, have been applied.

		\begin{align} \label{I6}
		\tilde{I}_6 = \,& \alpha \Big\langle \Big( 
	\tilde{\nabla}_{h, (4)}  ( \hat{\underline{\m}}_h^{\ell+3} + \hat{\m}_h^{\ell+3} ) 
	\cdot \tilde{\nabla}_{h, (4)} \hat{\e}_h^{\ell+3}  \Big) 		 
	\hat{\m}_h^{\ell+3} ,  2 \tilde{\e}_h^{\ell+3} - \tilde{\e}_h^{\ell+2} \Big\rangle  \\
		\le \, & \alpha \Big\| \Big( 
	 \tilde{\nabla}_{h, (4)}  ( \hat{\underline{\m}}_h^{\ell+3} + \hat{\m}_h^{\ell+3} ) 
	\cdot  \tilde{\nabla}_{h, (4)}  \hat{\e}_h^{\ell+3}  \Big) 		 
	\hat{\m}_h^{\ell+3} \Big\|_2 
	  \cdot  \| 2 \tilde{\e}_h^{\ell+3} - \tilde{\e}_h^{\ell+2} \|_2  \nonumber\\ 
	        \le \, & {\mathcal C} \alpha \| \nabla_h  \hat{\e}_h^{\ell+3} \|_2 
	        \cdot 	 \| 2 \tilde{\e}_h^{\ell+3} - \tilde{\e}_h^{\ell+2}  \|_2   \nonumber \\ 	               
		\le \, & \mathcal{C} ( \|\tilde{\e}_h^{\ell+3} \|_2^2 + \|\tilde{\e}_h^{\ell+2} \|_2^2 
		+ \| \nabla_h \e_h^{\ell+2} \|_2^2 + \| \nabla_h \e_h^{\ell+1} \|_2^2
		+ \| \nabla_h \e_h^\ell \|_2^2 ) . \nonumber
		\end{align}

	\end{itemize}	
	Meanwhile, by the telescope formula~\eqref{BDF-3-est-0}, the inner product of the left hand side of \cref{ccc73} with $2 \tilde{\e}_h^{\ell+3} - \tilde{\e}_h^{\ell+2}$ turns out to be	
	\begin{align*}
	L.H.S. = &  \frac{1}{k} \Big( 
	\| \alpha_1 \tilde{\e}_h^{\ell+3} \|_2^2 - \| \alpha_1 \tilde{\e}_h^{\ell+2} \|_2^2
 + \| \alpha_2 \tilde{\e}_h^{\ell+3} + \alpha_3 \tilde{\e}_h^{\ell+2} \|_2^2
 - \| \alpha_2 \tilde{\e}_h^{\ell+2} + \alpha_3 \tilde{\e}_h^{\ell+1} \|_2 ^2 \nonumber
\\
  & \quad 
  + \| \alpha_4 \tilde{\e}_h^{\ell+3} + \alpha_5 \tilde{\e}_h^{\ell+2} 
   + \alpha_6 \tilde{\e}_h^{\ell+1} \|_2^2
 - \| \alpha_4 \tilde{\e}_h^{\ell+2} + \alpha_5 \tilde{\e}_h^{\ell+1} 
 + \alpha_6 \tilde{\e}_h^{\ell} \|_2^2 \nonumber
\\
  & \quad 
    + \| \alpha_7 \tilde{\e}_h^{\ell+3} + \alpha_8 \tilde{\e}_h^{\ell+2} 
    + \alpha_9 \tilde{\e}_h^{\ell+1}
   + \alpha_{10} \tilde{\e}_h^{\ell} \|_2^2  \Big) .\nonumber
	\end{align*}
	Its combination with \cref{I1,I2,I3,I4,I5,I6} and \cref{rhs} leads to
	\begin{align}\label{ccc34}
	& 	\| \alpha_1 \tilde{\e}_h^{\ell+3} \|_2^2 - \| \alpha_1 \tilde{\e}_h^{\ell+2} \|_2^2
 + \| \alpha_2 \tilde{\e}_h^{\ell+3} + \alpha_3 \tilde{\e}_h^{\ell+2} \|_2^2
 - \| \alpha_2 \tilde{\e}_h^{\ell+2} + \alpha_3 \tilde{\e}_h^{\ell+1} \|_2 ^2 
\\
  & 
  + \| \alpha_4 \tilde{\e}_h^{\ell+3} + \alpha_5 \tilde{\e}_h^{\ell+2} 
   + \alpha_6 \tilde{\e}_h^{\ell+1} \|_2^2
 - \| \alpha_4 \tilde{\e}_h^{\ell+2} + \alpha_5 \tilde{\e}_h^{\ell+1} 
 + \alpha_6 \tilde{\e}_h^{\ell} \|_2^2  \nonumber 
\\
  & 
   + \frac{\alpha}{2} k ( \| \nabla_{h, (4)} \tilde{\e}_h^{\ell+3} \|_2^2  
	          - \| \nabla_{h, (4)} \tilde{\e}_h^{\ell+2} \|_2^2  )   \nonumber \\ 
	\le \, & \mathcal{C} k \Big( \|\nabla_h\tilde{\e}_h^{\ell+3}\|_2^2 
	+ \|\nabla_h\tilde{\e}_h^{\ell+2}\|_2^2  
	+ \|\nabla_h\tilde{\e}_h^{\ell+1}\|_2^2 
	+ \|\nabla_h\tilde{\e}_h^{\ell}\|_2^2 	\nonumber \\
	& \quad 
	+ \| \nabla_h \e_h^{\ell+2} \|_2^2 		
	+ \| \nabla_h \e_h^{\ell+1} \|_2^2 + \| \nabla_h \e_h^{\ell}\|_2^2 \nonumber \\ 
        & \quad 	
	+ \|\tilde{\e}_h^{\ell+3} \|_2^2 + \|\tilde{\e}_h^{\ell+2} \|_2^2 
	+ \| \e_h^{\ell+2} \|_2^2 + \| \e_h^{\ell+1}\|_2^2 + \| \e_h^{\ell}\|_2^2 \Big) 
	+ \mathcal{C} k (k^6 + h^8) 	. \nonumber
	\end{align}
	
	However, the standard $L^2$ error estimate~\cref{ccc34} does not allow one to apply discrete Gronwall inequality, due to the $H_h^1$ norms of the error function involved on the right hand side. To overcome this difficulty, we take a discrete inner product with the numerical error equation~\cref{ccc73} by $-\Delta_{h, (4)} ( 2 \tilde{\e}_h^{\ell+3} - \tilde{\e}_h^{\ell+2} )$ and see that
	\begin{align}
		R.H.S.&= \left\langle - \hat{\m}_h^{\ell+3}   
	\times \Delta_{h, (4)} \hat{\tilde{\e}}_h^{\ell+3}, 
	-\Delta_{h, (4)} ( 2 \tilde{\e}_h^{\ell+3} - \tilde{\e}_h^{\ell+2} ) \right\rangle \\
	&-\left\langle \hat{{\e}}_h^{\ell+3} \times \Delta_{h, (4)} \hat{\um}_h^{\ell+3}, 
	-\Delta_{h, (4)} ( 2 \tilde{\e}_h^{\ell+3} - \tilde{\e}_h^{\ell+2} ) \right\rangle \nonumber \\ 
	& + \left\langle \tau^{\ell+3}, 
	-\Delta_{h, (4)} ( 2 \tilde{\e}_h^{\ell+3} - \tilde{\e}_h^{\ell+2} )  \right\rangle 
	\nonumber\\
	& - \alpha \langle \Delta_{h, (4)} \tilde{\e}_h^{\ell+3} , 
	\Delta_{h, (4)} ( 2 \tilde{\e}_h^{\ell+3} - \tilde{\e}_h^{\ell+2} ) \rangle \nonumber \\ 
	& 
	+ \alpha \langle | \tilde{\nabla}_{h, (4)} \hat{\underline{\m}}_h^{\ell+3} |^2 
	\hat{\e}_h^{\ell+3} , 
	-\Delta_{h, (4)} ( 2 \tilde{\e}_h^{\ell+3} - \tilde{\e}_h^{\ell+2} )  \rangle  \nonumber\\
	& + \alpha \Big\langle \Big( 
	\tilde{\nabla}_{h, (4)}  ( \hat{\underline{\m}}_h^{\ell+3} + \hat{\m}_h^{\ell+3} ) 
	\cdot \tilde{\nabla}_{h, (4)} \hat{\e}_h^{\ell+3}  \Big) 		 
	\hat{\m}_h^{\ell+3} ,  
	-\Delta_{h, (4)} ( 2 \tilde{\e}_h^{\ell+3} - \tilde{\e}_h^{\ell+2} )  \Big\rangle  \nonumber  \\
	&=: I_1+ I_2+ I_3 + I_4  + I_5 + I_6 . \nonumber
	\end{align}
	\begin{itemize}
		\item Estimate of $I_1$: 
		 
		\begin{align} \label{I1-2-1}		
		  & 
		\| \Delta_{h, (4)} \hat{\tilde{\e}}_h^{\ell+3} \|_2 
		\cdot \| \Delta_{h, (4)} \tilde{\e}_h^{\ell+3}  \|_2 	 \\
		\le \, &  ( 3 \| \Delta_{h, (4)} \tilde{\e}_h^{\ell+2} \|_2 
		 + 2 \| \Delta_{h, (4)} \tilde{\e}_h^{\ell+1} \|_2  
		 +   \| \Delta_{h, (4)} ( \tilde{\e}_h^{\ell+1} - \tilde{\e}_h^{\ell} ) \|_2 	) 	 		
		\cdot \| \Delta_{h, (4)} \tilde{\e}_h^{\ell+3}  \|_2  \nonumber \\ 
		\le \, & \frac32 \| \Delta_{h, (4)} \tilde{\e}_h^{\ell+2} \|_2^2 
		 + \| \Delta_{h, (4)} \tilde{\e}_h^{\ell+1} \|_2^2   
		 +   \frac14 \| \Delta_{h, (4)} 
		 ( \tilde{\e}_h^{\ell+1} - \tilde{\e}_h^{\ell} ) \|_2^2  	 		
		+ \frac72 \| \Delta_{h, (4)} \tilde{\e}_h^{\ell+3}  \|_2^2 , \nonumber  
		\end{align} 
		
		\begin{align} \label{I1-2-2}			
		  & 
		\| \Delta_{h, (4)} \hat{\tilde{\e}}_h^{\ell+3} \|_2 
		\cdot \| \Delta_{h, (4)}  ( \tilde{\e}_h^{\ell+3} - \tilde{\e}_h^{\ell+2} )  \|_2  \\
		\le \, &  ( \| \Delta_{h, (4)} \tilde{\e}_h^{\ell+2} \|_2 
		 + 2 \| \Delta_{h, (4)} ( \tilde{\e}_h^{\ell+2} - \tilde{\e}_h^{\ell+1} ) \|_2  
		 +   \| \Delta_{h, (4)} ( \tilde{\e}_h^{\ell+1} - \tilde{\e}_h^{\ell} ) \|_2 	)  
		 \nonumber \\ 
		 & \quad 	 		
		\cdot \| \Delta_{h, (4)}  ( \tilde{\e}_h^{\ell+3} - \tilde{\e}_h^{\ell+2} )  \|_2 
		   \nonumber \\ 
		\le \, & \| \Delta_{h, (4)} \tilde{\e}_h^{\ell+2} \|_2^2  
		 + \| \Delta_{h, (4)} ( \tilde{\e}_h^{\ell+2} - \tilde{\e}_h^{\ell+1} ) \|_2^2   
		 +   \frac12 \| \Delta_{h, (4)} ( \tilde{\e}_h^{\ell+1} - \tilde{\e}_h^{\ell} ) \|_2^2   
		 \nonumber \\ 
		 & \quad 	 		
		 + \frac74 \| \Delta_{h, (4)}  ( \tilde{\e}_h^{\ell+3} - \tilde{\e}_h^{\ell+2} )  \|_2^2   ,  
		 \nonumber  		
		\end{align}

		\begin{align} \label{ccc11}		
		I_1 = \, & \left\langle - \hat{\m}_h^{\ell+3}   
	\times \Delta_{h, (4)} \hat{\tilde{\e}}_h^{\ell+3}, 
	-\Delta_{h, (4)} ( 2 \tilde{\e}_h^{\ell+3} - \tilde{\e}_h^{\ell+2} ) \right\rangle  \\ 	
		\le \, & \| 3 \m_h^{\ell+2} - 3 \m_h^{\ell+1} + \m_h^\ell  \|_\infty 
		\cdot \| \Delta_{h, (4)} \hat{\tilde{\e}}_h^{\ell+3} \|_2 
		\cdot \| \Delta_{h, (4)} ( 2 \tilde{\e}_h^{\ell+3} - \tilde{\e}_h^{\ell+2} )  \|_2 \nonumber  \\ 
		\le \, & \beta_1 ( \| \Delta_{h, (4)} \hat{\tilde{\e}}_h^{\ell+3} \|_2 
		\cdot \| \Delta_{h, (4)} \tilde{\e}_h^{\ell+3}  \|_2	 \nonumber \\ 
		& \quad 	
		+ \| \Delta_{h, (4)} \hat{\tilde{\e}}_h^{\ell+3} \|_2 
		\cdot \| \Delta_{h, (4)} ( \tilde{\e}_h^{\ell+3} - \tilde{\e}_h^{\ell+2} )  \|_2 )  \nonumber \\ 
		\le \, & \beta_1 \Big( \frac72 \| \Delta_{h, (4)} \tilde{\e}_h^{\ell+3} \|_2^2 
		 +  \frac52 \| \Delta_{h, (4)} \tilde{\e}_h^{\ell+2} \|_2^2   
		 +    \| \Delta_{h, (4)} \tilde{\e}_h^{\ell+1} \|_2^2  	 \nonumber \\	
		 & \quad 	 
		  + \frac74 \| \Delta_{h, (4)} ( \tilde{\e}_h^{\ell+3} - \tilde{\e}_h^{\ell+2} ) \|_2^2   
		 +      \| \Delta_{h, (4)} ( \tilde{\e}_h^{\ell+2} - \tilde{\e}_h^{\ell+1} ) \|_2^2  
		 \nonumber \\
		 & \quad 
		 + \frac34 \| \Delta_{h, (4)}  ( \tilde{\e}_h^{\ell+1} - \tilde{\e}_h^{\ell} )  \|_2^2 \Big) , 
		 \nonumber 	
		\end{align}
in which the preliminary $\| \cdot \|_\infty$ bound~\eqref{bound-5-3} has been applied.

		\item Estimate of $I_2$: 		
		\begin{align}\label{ccc14}
		I_2 =\, & -\left\langle \hat{{\e}}_h^{\ell+3} \times \Delta_{h, (4)} \hat{\um}_h^{\ell+3}, 
	-\Delta_{h, (4)} ( 2 \tilde{\e}_h^{\ell+3} - \tilde{\e}_h^{\ell+2} ) \right\rangle  \\
		= \, & \left\langle \nabla_{h, (4)} \left( \Delta_{h, (4)} \hat{\um}_h^{\ell+3} 
		\times ( 3 \e_h^{\ell+2} - 3 \e_h^{\ell+1} + \e_h^\ell ) \right), 
		\nabla_{h, (4)} ( 2 \tilde{\e}_h^{\ell+3} - \tilde{\e}_h^{\ell+2} ) \right\rangle \nonumber\\
		\le \, & \mathcal{C} \Big( 
		\| \nabla_{h, (4)} ( 2 \tilde{\e}_h^{\ell+3} - \tilde{\e}_h^{\ell+2} ) \|_2^2 
		+ \|\Delta_{h, (4)} \hat{\um}_h^{\ell+3} \|_{\infty}^2
		 \cdot\ |\nabla_h ( 3 \e_h^{\ell+2} - 3 \e_h^{\ell+1} + \e_h^\ell ) \|_2^2 \nonumber\\
		&+\|\nabla_h \Delta_{h, (4)} \hat{\um}_h^{\ell+3} \|_{\infty}^2 
		\cdot \| 3 \e_h^{\ell+2} - 3 \e_h^{\ell+1} + \e_h^\ell \|_2^2 \Big) \nonumber \\
		\le \, & \mathcal{C} \Big( \|\nabla_h\tilde{\e}_h^{\ell+3}\|_2^2 
		+ \|\nabla_h\tilde{\e}_h^{\ell+2}\|_2^2		
		+ \|\nabla_h\e_h^{\ell+2} \|_2^2 
		+ \|\nabla_h\e_h^{\ell+1} \|_2^2 + \|\nabla_h\e_h^{\ell}\|_2^2  \nonumber \\
		&  \quad 
		+ \|\e_h^{\ell+2} \|_2^2 
		+ \|\e_h^{\ell+1} \|_2^2+\|\e_h^{\ell}\|_2^2 \Big) .\nonumber
		\end{align}
Similarly, the bound for $\|\nabla_h \Delta_h \hat{\um}_h^{\ell+3}\|_{\infty}$ comes from the preliminary estimate~\eqref{bound-1}, by taking $r=3$.  
		
		\item Estimate of the truncation error term $I_3$:
		\begin{align}  \label{ccc23}		
		I_3 = \, & \left\langle - \Delta_{h, (4)} ( 2 \tilde{\e}_h^{\ell+3} - \tilde{\e}_h^{\ell+2} ) ,
		\tau^{\ell+3}\right\rangle 	 \\ 	
		\le \, & \mathcal{C} ( \|\nabla_{h, (4)} \tilde{\e}_h^{\ell+3} \|_2^2
		  + \|\nabla_{h, (4)} \tilde{\e}_h^{\ell+2} \|_2^2 ) 		
		+\mathcal{C}(k^6 + h^8) \nonumber  \\ 
		\le \, & \mathcal{C} ( \|\nabla_h \tilde{\e}_h^{\ell+3} \|_2^2
		  + \|\nabla_h \tilde{\e}_h^{\ell+2} \|_2^2 ) 		
		+\mathcal{C}(k^6 + h^8) ,  \nonumber 	
		\end{align}		
in which the discrete $H_h^1$ estimate~\eqref{truncation error-1} for the local truncation error has been recalled.  

 		\item Estimate of $I_4$: Similarly, the following equalities are available: 
		\begin{align} \label{I4-1-2}  
		  & 
		\langle \Delta_{h, (4)} \tilde{\e}_h^{\ell+3} , 
	\Delta_{h, (4)} ( \tilde{\e}_h^{\ell+3} - \tilde{\e}_h^{\ell+2} ) \rangle  \\ 
	         = \, & \frac12 ( \| \Delta_{h, (4)} \tilde{\e}_h^{\ell+3} \|_2^2  
	          - \| \Delta_{h, (4)} \tilde{\e}_h^{\ell+2} \|_2^2   
	        + \| \Delta_{h, (4)} ( \tilde{\e}_h^{\ell+3} - \tilde{\e}_h^{\ell+2} ) \|_2^2  )  ,   \nonumber 
		\end{align}		
		
		\begin{align} \label{I4-2}
		I_4 = \, &  - \alpha \langle \Delta_{h, (4)} \tilde{\e}_h^{\ell+3} , 
	\Delta_{h, (4)} ( 2 \tilde{\e}_h^{\ell+3} - \tilde{\e}_h^{\ell+2} ) \rangle  \\  
	       = \, &  - \alpha \| \Delta_{h, (4)} \tilde{\e}_h^{\ell+3} \|_2^2 
	       - \alpha  \langle \Delta_{h, (4)} \tilde{\e}_h^{\ell+3} , 
	\Delta_{h, (4)} ( \tilde{\e}_h^{\ell+3} - \tilde{\e}_h^{\ell+2} ) \rangle  \nonumber \\
	       = \, & - \alpha \| \Delta_{h, (4)} \tilde{\e}_h^{\ell+3} \|_2^2  
	        - \frac{\alpha}{2} ( \| \Delta_{h, (4)} \tilde{\e}_h^{\ell+3} \|_2^2  
	          - \| \Delta_{h, (4)} \tilde{\e}_h^{\ell+2} \|_2^2  )   \nonumber \\
	          \, & 
	         - \frac{\alpha}{2} \| \Delta_{h, (4)} ( \tilde{\e}_h^{\ell+3} - \tilde{\e}_h^{\ell+2} ) \|_2^2 .  	        \nonumber  	       
		\end{align}
	
		\item Estimate of $I_5$: A substitution of the preliminary estimate~\eqref{I5-1} yields 
		\begin{align}\label{ccc16}
		I_5 = \,& \alpha \langle | \tilde{\nabla}_{h, (4)} \hat{\underline{\m}}_h^{\ell+3} |^2 
	\hat{\e}_h^{\ell+3} , 
	-\Delta_{h, (4)} ( 2 \tilde{\e}_h^{\ell+3} - \tilde{\e}_h^{\ell+2} )  \rangle  \\
		\le \, & \alpha  \Big\| | \tilde{\nabla}_{h, (4)} \hat{\underline{\m}}_h^{\ell+3} |^2 
 	\hat{\e}_h^{\ell+3} \Big\|_2   
	 \cdot \| \Delta_{h, (4)} ( 2 \tilde{\e}_h^{\ell+3} - \tilde{\e}_h^{\ell+2} ) \|_2 \nonumber\\
		\le \, & {\mathcal C}  \alpha \| \hat{\e}_h^{\ell+3} \|_2 
		 \cdot \| \Delta_{h, (4)} ( 2 \tilde{\e}_h^{\ell+3} - \tilde{\e}_h^{\ell+2} ) \|_2  
		 \nonumber \\ 
		 \le \, &	
		 {\mathcal C}  (   \| \e_h^{\ell+2} \|_2^2 
		 + \| \e_h^{\ell+1} \|_2^2  + \| \e_h^{\ell} \|_2^2 ) 		 
		 + \frac{\gamma_0}{16} ( \| \Delta_{h, (4)} \tilde{\e}_h^{\ell+3} \|_2^2
		  + \| \Delta_{h, (4)} \tilde{\e}_h^{\ell+2} \|_2^2 ) . \nonumber
		\end{align}
		
		\item Estimate $I_6$: A substitution of the preliminary estimate~\eqref{I6-1} leads to  		
		\begin{align}\label{point21}
		I_6=\,& \alpha \Big\langle \Big( 
	\tilde{\nabla}_{h, (4)}  ( \hat{\underline{\m}}_h^{\ell+3} + \hat{\m}_h^{\ell+3} ) 
	\cdot \tilde{\nabla}_{h, (4)} \hat{\e}_h^{\ell+3}  \Big) 		 
	\hat{\m}_h^{\ell+3} ,  
	-\Delta_{h, (4)} ( 2 \tilde{\e}_h^{\ell+3} - \tilde{\e}_h^{\ell+2} )  \Big\rangle   \\
		\le \, & \alpha \Big\| \Big( 
	\tilde{\nabla}_{h, (4)}  ( \hat{\underline{\m}}_h^{\ell+3} + \hat{\m}_h^{\ell+3} ) 
	\cdot \tilde{\nabla}_{h, (4)} \hat{\e}_h^{\ell+3}  \Big) 		 
	\hat{\m}_h^{\ell+3} \Big\|_2   \nonumber \\
	& \quad 
	\cdot \| \Delta_{h, (4)} ( 2 \tilde{\e}_h^{\ell+3} - \tilde{\e}_h^{\ell+2} ) \|_2  \nonumber\\
		\le \, & {\mathcal C}  \alpha \| \nabla_h  \hat{\e}_h^{\ell+3} \|_2 		
		 \cdot \| \Delta_{h, (4)} ( 2 \tilde{\e}_h^{\ell+3} - \tilde{\e}_h^{\ell+2} ) \|_2   \nonumber \\ 
		 \le 	\, & {\mathcal C}  (  \| \nabla_h \e_h^{\ell+2} \|_2^2 
		 + \| \nabla_h \e_h^{\ell+1} \|_2^2  
		 + \| \nabla_h \e_h^{\ell} \|_2^2 )  \nonumber \\
		 & \quad 
		 + \frac{\gamma_0}{16} ( \| \Delta_{h, (4)} \tilde{\e}_h^{\ell+3} \|_2^2
		  + \| \Delta_{h, (4)} \tilde{\e}_h^{\ell+2} \|_2^2 )    .	\nonumber
		\end{align}

	\end{itemize}
	Meanwhile, the inner product on the left hand side becomes the following identity, following similar telescope formula~\eqref{BDF-3-est-0}, combined with the summation by parts formula~\eqref{sum3}: 	
	\begin{align}  \label{ccc25}	
	L.H.S. = &  \frac{1}{k} \Big( 
	\| \alpha_1 \nabla_{h, (4)} \tilde{\e}_h^{\ell+3} \|_2^2 
	- \| \alpha_1 \nabla_{h, (4)} \tilde{\e}_h^{\ell+2} \|_2^2  
\\
  & \quad 
 + \|  \nabla_{h, (4)} ( \alpha_2 \tilde{\e}_h^{\ell+3} + \alpha_3 \tilde{\e}_h^{\ell+2} ) \|_2^2
 - \| \nabla_{h, (4)} ( \alpha_2 \tilde{\e}_h^{\ell+2} 
  + \alpha_3 \tilde{\e}_h^{\ell+1} ) \|_2 ^2  \nonumber 
\\
  & \quad 
  + \|  \nabla_{h, (4)} ( \alpha_4 \tilde{\e}_h^{\ell+3} + \alpha_5 \tilde{\e}_h^{\ell+2} 
  + \alpha_6 \tilde{\e}_h^{\ell+1} ) \|_2^2 \nonumber 
\\
   & \quad 
 - \|  \nabla_{h, (4)} ( \alpha_4 \tilde{\e}_h^{\ell+2} + \alpha_5 \tilde{\e}_h^{\ell+1} 
  + \alpha_6 \tilde{\e}_h^{\ell} ) \|_2^2 \nonumber
\\
  & \quad 
    + \|  \nabla_{h, (4)} ( \alpha_7 \tilde{\e}_h^{\ell+3} + \alpha_8 \tilde{\e}_h^{\ell+2} 
    + \alpha_9 \tilde{\e}_h^{\ell+1}
   + \alpha_{10} \tilde{\e}_h^{\ell} ) \|_2^2  \Big) .\nonumber
	\end{align}
	Substituting~\cref{ccc11}, \cref{ccc14,ccc16,I4-2,point21,ccc23} into~\cref{ccc73}, combined with~\cref{ccc25}, we arrive at
	\begin{align}\label{26}
	&   \| \alpha_1 \nabla_{h, (4)} \tilde{\e}_h^{\ell+3} \|_2^2 
	- \| \alpha_1 \nabla_{h, (4)} \tilde{\e}_h^{\ell+2} \|_2^2  
\\
  & 
 + \|  \nabla_{h, (4)} ( \alpha_2 \tilde{\e}_h^{\ell+3} + \alpha_3 \tilde{\e}_h^{\ell+2} ) \|_2^2
 - \| \nabla_{h, (4)} ( \alpha_2 \tilde{\e}_h^{\ell+2} 
  + \alpha_3 \tilde{\e}_h^{\ell+1} ) \|_2 ^2  \nonumber 
\\
  & 
  + \|  \nabla_{h, (4)} ( \alpha_4 \tilde{\e}_h^{\ell+3} + \alpha_5 \tilde{\e}_h^{\ell+2} 
  + \alpha_6 \tilde{\e}_h^{\ell+1} ) \|_2^2 \nonumber 
\\
   & 
 - \|  \nabla_{h, (4)} ( \alpha_4 \tilde{\e}_h^{\ell+2} + \alpha_5 \tilde{\e}_h^{\ell+1} 
  + \alpha_6 \tilde{\e}_h^{\ell} ) \|_2^2 \nonumber 
\\
  & 
    + \frac{\alpha}{2} k ( \| \Delta_{h, (4)} \tilde{\e}_h^{\ell+3} \|_2^2  
	          - \| \Delta_{h, (4)} \tilde{\e}_h^{\ell+2} \|_2^2  )   \nonumber 
\\ 
	& 
	+ ( \alpha - \frac{7 \beta_1}{2} - \frac{\gamma_0}{8} ) k 
	\| \Delta_{h, (4)} \tilde{\e}_h^{\ell+3} \|_2^2 
	- ( \frac52 \beta_1 + \frac{\gamma_0}{8} )  k 
	\| \Delta_{h, (4)} \tilde{\e}_h^{\ell+2} \|_2^2  \nonumber 
\\ 
	& 
	-  \beta_1  k 
	\| \Delta_{h, (4)} \tilde{\e}_h^{\ell+1} \|_2^2   
	+  ( \frac{\alpha}{2} - \frac74 \beta_1  ) k 
	\| \Delta_{h, (4)} ( \tilde{\e}_h^{\ell+3} - \tilde{\e}_h^{\ell+2} ) \|_2^2  \nonumber 
\\
       & 
	- \beta_1  k 
	\| \Delta_{h, (4)} ( \tilde{\e}_h^{\ell+2} - \tilde{\e}_h^{\ell+1} ) \|_2^2	 
     -  \frac{3 \beta_1}{4}  k 
	\| \Delta_{h, (4)} ( \tilde{\e}_h^{\ell+1} - \tilde{\e}_h^{\ell} ) \|_2^2	   \nonumber 	
\\
	\le \, & \mathcal{C} k \Big( \|\nabla_h \tilde{\e}_h^{\ell+3}\|_2^2 
	+ \| \nabla_h \tilde{\e}_h^{\ell+2}\|_2^2 + \|\nabla_h\e_h^{\ell+2}\|_2^2 	
	+ \|\nabla_h\e_h^{\ell+1}\|_2^2 +\|\nabla_h\e_h^{\ell}\|_2^2  \nonumber 
\\
   & \quad 
	+ \| \e_h^{\ell+2} \|_2^2 + \|\e_h^{\ell+1} \|_2^2 + \| \e_h^\ell \|_2^2 \Big) 
	 +\mathcal{C}k (k^6 + h^8) . 
	\nonumber
	\end{align}
	As a consequence, a combination of~\cref{ccc34} and~\cref{26} yields
	\begin{align}\label{convergence-1}
		& \| \alpha_1 \tilde{\e}_h^{\ell+3} \|_2^2 - \| \alpha_1 \tilde{\e}_h^{\ell+2} \|_2^2
 + \| \alpha_2 \tilde{\e}_h^{\ell+3} + \alpha_3 \tilde{\e}_h^{\ell+2} \|_2^2
 - \| \alpha_2 \tilde{\e}_h^{\ell+2} + \alpha_3 \tilde{\e}_h^{\ell+1} \|_2 ^2 
\\
  & 
  + \| \alpha_4 \tilde{\e}_h^{\ell+3} + \alpha_5 \tilde{\e}_h^{\ell+2} 
   + \alpha_6 \tilde{\e}_h^{\ell+1} \|_2^2
 - \| \alpha_4 \tilde{\e}_h^{\ell+2} + \alpha_5 \tilde{\e}_h^{\ell+1} 
 + \alpha_6 \tilde{\e}_h^{\ell} \|_2^2  \nonumber 	 
\\
  & 
      +  \| \alpha_1 \nabla_{h, (4)} \tilde{\e}_h^{\ell+3} \|_2^2 
	- \| \alpha_1 \nabla_{h, (4)} \tilde{\e}_h^{\ell+2} \|_2^2   \nonumber 
\\
  &  
 + \|  \nabla_{h, (4)} ( \alpha_2 \tilde{\e}_h^{\ell+3} + \alpha_3 \tilde{\e}_h^{\ell+2} ) \|_2^2
 - \| \nabla_{h, (4)} ( \alpha_2 \tilde{\e}_h^{\ell+2} 
  + \alpha_3 \tilde{\e}_h^{\ell+1} ) \|_2 ^2  \nonumber 
\\
  &  
  + \|  \nabla_{h, (4)} ( \alpha_4 \tilde{\e}_h^{\ell+3} + \alpha_5 \tilde{\e}_h^{\ell+2} 
  + \alpha_6 \tilde{\e}_h^{\ell+1} ) \|_2^2 \nonumber 
\\
   &  
 - \|  \nabla_{h, (4)} ( \alpha_4 \tilde{\e}_h^{\ell+2} + \alpha_5 \tilde{\e}_h^{\ell+1} 
  + \alpha_6 \tilde{\e}_h^{\ell} ) \|_2^2 \nonumber   
\\
  & 
    + \frac{\alpha}{2} k ( \| \nabla_{h, (4)} \tilde{\e}_h^{\ell+3} \|_2^2
                  + \| \Delta_{h, (4)} \tilde{\e}_h^{\ell+3} \|_2^2   
                  - \| \nabla_{h, (4)} \tilde{\e}_h^{\ell+2} \|_2^2                      
	          - \| \Delta_{h, (4)} \tilde{\e}_h^{\ell+2} \|_2^2  )   \nonumber 
\\ 
	& 
	+ ( \alpha - \frac{7 \beta_1}{2} - \frac{\gamma_0}{8} ) k 
	\| \Delta_{h, (4)} \tilde{\e}_h^{\ell+3} \|_2^2 
	- ( \frac52 \beta_1 + \frac{\gamma_0}{8} )  k 
	\| \Delta_{h, (4)} \tilde{\e}_h^{\ell+2} \|_2^2  \nonumber 
\\ 
	& 
	-  \beta_1  k 
	\| \Delta_{h, (4)} \tilde{\e}_h^{\ell+1} \|_2^2   
	+  ( \frac{\alpha}{2} - \frac74 \beta_1  ) k 
	\| \Delta_{h, (4)} ( \tilde{\e}_h^{\ell+3} - \tilde{\e}_h^{\ell+2} ) \|_2^2  \nonumber 
\\
       & 
	- \beta_1  k 
	\| \Delta_{h, (4)} ( \tilde{\e}_h^{\ell+2} - \tilde{\e}_h^{\ell+1} ) \|_2^2	 
     -  \frac{3 \beta_1}{4}  k 
	\| \Delta_{h, (4)} ( \tilde{\e}_h^{\ell+1} - \tilde{\e}_h^{\ell} ) \|_2^2	   \nonumber 	          	          
\\ 	
	\le \, & \mathcal{C} k \Big( \|\nabla_h\tilde{\e}_h^{\ell+3}\|_2^2 
	+ \|\nabla_h\tilde{\e}_h^{\ell+2}\|_2^2  
	+ \| \nabla_h \e_h^{\ell+2} \|_2^2 		
	+ \| \nabla_h \e_h^{\ell+1} \|_2^2 + \| \nabla_h \e_h^{\ell}\|_2^2 \nonumber \\ 
        & \quad 	
	+ \|\tilde{\e}_h^{\ell+3} \|_2^2 + \|\tilde{\e}_h^{\ell+2} \|_2^2 
	+ \| \e_h^{\ell+2} \|_2^2 + \| \e_h^{\ell+1}\|_2^2 + \| \e_h^{\ell}\|_2^2 \Big) 
	+ \mathcal{C} k (k^6 + h^8) . 
	\nonumber
	\end{align}	
	At this point, recalling the $W_h^{1,\infty}$ bound for $\m_h^k$ and $\tilde{\m}_h^k$, as given by~\eqref{bound-3}, \cref{bound-4}, and applying~\eqref{lem 6-2} in Lemma~\ref{lem 6-0}, we obtain
	\begin{equation*}
	\| \e_h^k \|_2 \le 2 \| \tilde{\e}_h^k \|_2  ,  \, \, \,
	\| \nabla_h \e_h^k \|_2 \le \mathcal{C} ( \| \nabla_h \tilde{\e}_h^k \|_2
	+ \| \tilde{\e}_h^k \|_2 ) , \quad \mbox{$\ell \le k \le \ell+2$} .  \label{convergence-2}
	\end{equation*}
	Its substitution into~\cref{convergence-1} leads to
	\begin{align} \label{convergence-3}
			& 
	 {\mathcal H}^{\ell+3}  - {\mathcal H}^{\ell+2}    
	 	+ ( \alpha - \frac{7 \beta_1}{2} - \frac{\gamma_0}{8} ) k 
	\| \Delta_{h, (4)} \tilde{\e}_h^{\ell+3} \|_2^2 
	- ( \frac52 \beta_1 + \frac{\gamma_0}{8} )  k 
	\| \Delta_{h, (4)} \tilde{\e}_h^{\ell+2} \|_2^2  
\\ 
	& 
	-  \beta_1  k 
	\| \Delta_{h, (4)} \tilde{\e}_h^{\ell+1} \|_2^2   
	+  ( \frac{\alpha}{2} - \frac74 \beta_1  ) k 
	\| \Delta_{h, (4)} ( \tilde{\e}_h^{\ell+3} - \tilde{\e}_h^{\ell+2} ) \|_2^2  \nonumber 
\\
       & 
	- \beta_1  k 
	\| \Delta_{h, (4)} ( \tilde{\e}_h^{\ell+2} - \tilde{\e}_h^{\ell+1} ) \|_2^2	 
     -  \frac{3 \beta_1}{4}  k 
	\| \Delta_{h, (4)} ( \tilde{\e}_h^{\ell+1} - \tilde{\e}_h^{\ell} ) \|_2^2	   \nonumber 		 
\\ 	
	\le \, & \mathcal{C} k \Big( \|\nabla_h \e_h^{\ell+3}\|_2^2  
	+ \| \nabla_h \e_h^{\ell+2} \|_2^2 		
	+ \| \nabla_h \e_h^{\ell+1} \|_2^2 + \| \nabla_h \e_h^{\ell}\|_2^2 \nonumber \\ 
        & \quad 	
	+ \| \e_h^{\ell+3} \|_2^2 + \| \e_h^{\ell+2} \|_2^2 
	+ \| \e_h^{\ell+1}\|_2^2 + \| \e_h^{\ell}\|_2^2 \Big) 
	+ \mathcal{C} k (k^6 + h^8) ,  \nonumber 
      \end{align} 
      
      \begin{align} 
  & 
  {\mathcal H}^{\ell+3} := \| \alpha_1 \tilde{\e}_h^{\ell+3} \|_2^2 
 + \| \alpha_2 \tilde{\e}_h^{\ell+3} + \alpha_3 \tilde{\e}_h^{\ell+2} \|_2^2    
 \\
   & \qquad 
  + \| \alpha_4 \tilde{\e}_h^{\ell+3} + \alpha_5 \tilde{\e}_h^{\ell+2} 
   + \alpha_6 \tilde{\e}_h^{\ell+1} \|_2^2   
  \nonumber 
\\
  &   \qquad 
       + \| \alpha_1 \nabla_{h, (4)} \tilde{\e}_h^{\ell+3} \|_2^2 
     + \|  \nabla_{h, (4)} ( \alpha_2 \tilde{\e}_h^{\ell+3} 
     + \alpha_3 \tilde{\e}_h^{\ell+2} ) \|_2^2 \nonumber  
\\
  &  \qquad 
  + \|  \nabla_{h, (4)} ( \alpha_4 \tilde{\e}_h^{\ell+3} + \alpha_5 \tilde{\e}_h^{\ell+2} 
  + \alpha_6 \tilde{\e}_h^{\ell+1} ) \|_2^2  \nonumber  
\\
  &  \qquad 
    + \frac{\alpha}{2} k ( \| \nabla_{h, (4)} \tilde{\e}_h^{\ell+3} \|_2^2
                  + \| \Delta_{h, (4)} \tilde{\e}_h^{\ell+3} \|_2^2  ) .   
	\nonumber
	\end{align} 
In turn, a summation in time reveals that 
\begin{equation} 
\begin{aligned} 
   {\mathcal H}^{\ell+3}  
   + \frac{\gamma_0}{4} k \sum_{j=0}^{\ell+3} \| \Delta_{h, (4)} \tilde{\e}_h^j \|_2^2   
   \le & {\mathcal C} k \sum_{j=0}^{\ell+3}    \Big( \|\nabla_h \e_h^j \|_2^2  
	 + \| \e_h^j \|_2^2  \Big) 
	+ \mathcal{C} T (k^6 + h^8) 
\\
  \le & 
  {\mathcal C} k \sum_{j=0}^{\ell+3} {\mathcal H}^j  
	+ \mathcal{C} T (k^6 + h^8) ,  
\end{aligned} 
\end{equation} 
in which the estimate that $\| \tilde{\e}_h^j \|_2^2 , \| \nabla_h \tilde{\e}_h^j \|_2^2 \le {\mathcal C} {\mathcal H}^j$, as well as the following fact, have been applied: 
\begin{equation*} 
\begin{aligned} 
 & 
 \alpha - \frac{7 \beta_1}{2} - \frac{\gamma_0}{8}  - \frac52 \beta_1 - \frac{\gamma_0}{8} -  \beta_1  = \alpha - 7 \beta_1 - \frac{\gamma_0}{4} 
  = \frac{\gamma_0}{4} >0 , 
\\
  & 
  \frac{\alpha}{2} - \frac74 \beta_1 - \beta_1 - \frac43 \beta_1 = \frac{\alpha}{2} - \frac72 \beta_1 
  = \frac{\gamma_0}{4} >0  , 
\end{aligned} 
\end{equation*} 
Therefore, an application of discrete Gronwall inequality (in Lemma~\ref{ccclem1}) yields the desired convergence estimate for $\tilde{\e}_h$:
	\begin{align*}
	\| \tilde{\e}_h^n\|_2^2 + \|\nabla_h\tilde{\e}_h^n\|_2^2  
	\le {\mathcal H}^n 
	 \leq \mathcal{C}Te^{\mathcal{C}T}(k^6 + h^8 ), \quad \forall \, n\leq \left\lfloor\frac{T}{k}\right\rfloor , 
	\end{align*}
	i.e.,
	\begin{align*} 
	\|\tilde{\e}_h^n\|_2 + \|\nabla_h\tilde{\e}_h^n\|_2
	&\le \mathcal{C}(k^3 + h^4) .
	\end{align*}
	An application of Lemma~\ref{ccclemC1}, as well as the time step constraint $k\leq \mathcal{C}h$, leads to
	\begin{align}\label{bound-6}
	\|\tilde{\e}_h^n\|_{\infty} &\leq \frac{\|\tilde{\e}_h^n\|_2}{h^{d/2}}\leq 
	\frac{\mathcal{C} (k^3 + h^4)}{h^{d/2}}\leq \frac{1}{6}, \\
	\|\nabla_h\tilde{\e}_h^n\|_{\infty} &\leq 
	\frac{\|\nabla_h\tilde{\e}_h^n\|_2}{h^{d/2}} 
	\leq \frac{\mathcal{C} (k^3 + h^4)}{h^{d/2}}\leq \frac{1}{6},\nonumber
	\end{align}
	so that the second part of the \textit{a priori} assumption~\cref{bound-2} has been recovered at time step $k=n$. In turn, the $W_h^{1,\infty}$ bound~\cref{bound-4} becomes available, which enables us to apply~\cref{lem 6-2} in Lemma~\ref{lem 6-0}, and obtain the desired convergence estimate for $\e_h^n$:
	\begin{align*}
	& \| \e_h^n \|_2 \le 2 \| \tilde{\e}_h^n \|_2 
	\le \mathcal{C} (k^3 + h^4)  ,  \label{convergence-6}  \\
	& \| \nabla_h \e_h^n \|_2 \le \mathcal{C} ( \| \nabla_h \tilde{\e}_h^n \|_2
	+ \| \tilde{\e}_h^n \|_2 ) 
	\le \mathcal{C} (k^3 + h^4) .  \nonumber
	\end{align*}
	Similar to the derivation of~\cref{bound-6}, we also get
	\begin{equation}  
\begin{aligned} 
	\| \e_h^n\|_{\infty} \le  & {\mathcal C} ( \| \e_h^n\|_{H_h^1} 
	+ \| \e_h^n\|_{H_h^1}^\frac34 \cdot \| \nabla_h \Delta_h \e_h^n\|_2^\frac14 ) 
\\
  \le & 
       {\mathcal C} \Big( \| \e_h^n\|_{H_h^1} 
	+ \| \e_h^n\|_{H_h^1}^\frac34 \cdot \Big( \frac{\| \nabla_h \e_h^n \|_2}{h^2} \Big)^\frac14 \Big)    
\\
  \le & 
   \frac{ {\mathcal C} ( k^3 + h^4 )}{h^\frac12}  \le \dt + h ,  
\\ 
	\|\nabla_h \e_h^n\|_{\infty} \le & \frac{1}{6} , 
	\quad \mbox{(similar derivation as~\eqref{bound-6}} , 	
\end{aligned}  
\label{bound-7}
	\end{equation} 
provided that $k \le {\mathcal C} h$, and $k$ and $h$ are sufficiently small. We also notice that the first inequality in~\eqref{bound-7} stands for a discrete Gagliardo-Nirenberg inequality in the finite difference version, which has been derived in a recent work~\cite{chen16}. As a consequence, the first part of the \textit{a priori} assumption~\cref{bound-2} has been recovered at time step $k=\ell+3$. This completes the proof of Theorem~\ref{cccthm2}.
\end{proof}

\begin{remark} 
The regularity assumption for the exact solution, namely $\m_e \in C^4 ([0,T]; C^0) \cap C^3([0,T]; C^1) \cap L^{\infty}([0,T]; C^6)$, as stated in Theorem~\ref{cccthm2}, is very strong. In fact, a global-in-time weak solution of the LL equation~\cref{c1} is only of regularity class $L^{\infty}([0,T]; H^1) \cap L^2([0,T]; H^3)$. Of course, if the initial data is smooth enough, one could always derive a local-in-time exact solution with higher enough regularity estimate, so that the convergence estimate established in Theorem~\ref{cccthm2} could pass through. In other words, the optimal rate error estimate~\eqref{convergence-0} stands for a local-in-time theoretical result. In addition, since the finite difference numerical method is evaluated at the collocation grid points, instead of the ones based on a weak formulation, it usually requires higher order regularity requirement for the exact solution in the optimal rate convergence estimate than that of the finite element approach; see the related finite difference analysis for various gradient flows~\cite{baskaran13b, wang11a, wise09a}, etc.    
\end{remark} 

\begin{remark} 
The condition $\alpha > 7$ is a very strong constraint. In fact, such a condition is used in the estimate~\eqref{ccc11} for $I_1$, we need $\alpha > 7$ to control these Laplacian terms, due to the explicit treatment of $\Delta_h \hat{\tilde{\m}}_h^{\ell+3}$. Meanwhile, such an inequality only stands for a theoretical difficulty, and the practical computations may not need that large value of $\alpha$. In most practical simulation examples, a value of $\alpha > 1$ would be sufficient to ensure thee numerical stability of the proposed numerical scheme~\cref{cc}-\cref{scheme-1-2}. 

In addition, the explicit treatment of the Laplacian term, namely $\Delta_{h, (4)} \hat{\tilde{\m}}_h^{n+3} = \Delta_{h, (4)} ( 3 \tilde{\m}_h^{n+2} - 3 \tilde{\m}_h^{n+1} + \tilde{\m}_h^n )$, will greatly improve the numerical efficiency, since only a constant-coefficient Poisson solver is needed ta each step. This crucial fact enables one to produce very robust numerical simulation results at a much-reduced computational cost.  
\end{remark}

\section{Numerical examples}
\label{sec:experiments}

In this section, we verify its accuracy in one-dimentional (1D) and three-dimentional (3D) cases. In 1D, we choose the exact solution as below,
\begin{align*}
\m_e=[\cos(\cos(\pi x)) \sin (t), \sin(\cos(\pi x)) \sin(t),\cos(t)].
\end{align*}
	
The spatial accuracy in 1D is shown in \cref{spaceAccuracy-v2}. The temporal accuracy in 1D \cref{timeAccuracy-v2}. 
\begin{table}[htbp]
	\centering
	\caption{Spatial accuracy for our proposed scheme in 1D with $\alpha=10$, $N_t=1$e5.}\label{spaceAccuracy-v2}
	\begin{tabular}{c|c|c|c}
		\hline
		$h$&$\|\m_h-\m_e\|_{\infty}$ &$\|\m_h-\m_e\|_2$ &$\|\m_h-\m_e\|_{H^1}$ \\
		\hline
		$1/16$&7.725597545818474e-06& 5.836998359249282e-06& 9.863379588342884e-05  \\
		\hline
		$1/32$& 5.043991847669682e-07& 3.708268012068762e-07& 6.357960049137558e-06   \\
		\hline
		$1/64$& 3.188098195161526e-08& 2.327672256026284e-08& 4.005750052425590e-07 \\
		\hline
		$1/128$&1.998533172287154e-09& 1.456447923591758e-09& 2.508658656526442e-08  \\
		\hline
		$1/256$&1.248005865317481e-10& 9.109844749647466e-11& 1.568595885318644e-09 \\
		\hline
		$1/512$&1.267810856298013e-11& 9.094912356564941e-12& 9.626833899837611e-11  \\
		\hline
		order &3.89		 &3.90&3.99\\
		\hline
	\end{tabular}
\end{table}

\begin{table}[htbp]
	\centering
	\caption{Temporal accuracy for our proposed scheme in 1D with $\alpha=10$, $N_x=1$e4.}\label{timeAccuracy-v2}
	\begin{tabular}{c|c|c|c}
		\hline
		$k$&$\|\m_h-\m_e\|_{\infty}$ &$\|\m_h-\m_e\|_2$ &$\|\m_h-\m_e\|_{H^1}$ \\
		\hline
		$T/8$ & 1.981641473136619e-07& 1.387988010512471e-07& 6.165376490036942e-07 \\
		\hline
		$T/12$& 5.829046183930542e-08& 4.227466726190239e-08& 1.880848678938826e-07\\
		\hline
		$T/16$& 2.484947866226994e-08& 1.712105680469501e-08& 7.576681980972073e-08 \\
		\hline
		$T/24$& 7.528568560233317e-09& 4.909750409830862e-09& 2.140936685691737e-08\\
		\hline
		$T/32$ &3.027002620781261e-09& 2.286226144082027e-09& 1.018690881652361e-08\\
		\hline
		order &3.00		 &2.99&3.00\\
		\hline
	\end{tabular}
\end{table}


In 3D, we take the exact solution as below,
\begin{align*}
\m_e=[\cos(\cos(\pi x)\cos(\pi y)\cos(\pi z)) \sin (t), \sin(\cos(\pi x)\cos(\pi y)\cos(\pi z)) \sin(t),\cos(t)].
\end{align*}

The results for spatial accuracy are presented in \cref{spaceAccuracy-A-2}. The temporal accuracy is shown in \cref{timeAccuracy-A-4}.


\begin{table}[htbp]
	\centering
	\caption{Spatial accuracy for our proposed scheme in 3D with $\alpha=10$, $N_t=1$e4 and $T=1$.}\label{spaceAccuracy-A-2}
	\begin{tabular}{c|c|c|c}
		\hline
		$h$&$\|\m_h-\m_e\|_{\infty}$ &$\|\m_h-\m_e\|_2$ &$\|\m_h-\m_e\|_{H^1}$ \\
		\hline
		$1/12$& 0.482160540597345& 0.281542274274310 &0.393105651737428\\
		\hline
		$1/16$& 0.148299462755549 & 0.092398029798804 &0.131951193056456    \\
		\hline
		$1/20$& 0.059102162889618 & 0.037761936069616 &0.054265772634242  \\
		\hline
		$1/24$& 0.028136229626891 & 0.018143447408694 &0.026115549952996  \\
		\hline
		$1/28$&0.015083789029621 & 0.009766140937128 &0.014061628506293   \\
		\hline
		order &4.09		 &3.97&3.94\\
		\hline
	\end{tabular}
\end{table}


\begin{table}[htbp]
	\centering
	\caption{Temporal accuracy for our proposed scheme in 3D with $\alpha=10$, and $T=1$.}\label{timeAccuracy-A-4}
	\begin{tabular}{c|c|c|c|c}
		\hline
		$h$&$k,\;k^3\approx h^4$&$\|\m_h-\bm_e\|_{\infty}$ &$\|\m_h-\m_e\|_2$ &$\|\m_h-\m_e\|_{H^1}$ \\
		\hline
		$1/16$&$1/40$& 0.232983042019129 & 0.142854067479002 &0.201849553240457\\
		\hline
		$1/20$&$1/54$& 0.109240092920532 & 0.068926003736393  &0.098135067448825  \\
		\hline
		$1/24$&$1/69$& 0.055967639432972 & 0.035819688414051 & 0.051231145694668 \\
		\hline
		$1/28$&$1/85$&0.031061236247365 & 0.020021764764969 & 0.028699857658575  \\
		\hline
		$1/32$&$1/101$&0.018690676252636 & 0.012092732078072 & 0.017350944169495  \\
		\hline
		order& &2.72		 &2.67&2.65\\
		\hline
	\end{tabular}
\end{table}

\section{Conclusions}
\label{sec:conclusions}

In this paper, we develop a fully discrete finite difference scheme for the LLG equation, with the fourth order spatial accuracy and third order temporal accuracy. The fourth order spatial accuracy is obtained by a long stencil finite difference, and a symmetric boundary extrapolation is applied, based on a higher order Taylor's expansion around the boundary section. The third-order backward differentiation formula is applied in the temporal discretization, the linear diffusion term is treated implicitly, while the nonlinear terms are updated by a fully explicit extrapolation formula. A detailed convergence analysis and error estimate are provided for the proposed numerical scheme, which gives an optimal $\mathcal{O}(k^3+h^4)$ accuracy order in the $\ell^{\infty}([0,T];\ell^2)\cup \ell^2([0,T];H_h^1)$ norm under suitable regularity assumptions and reasonable ratio between the time step-size and the spatial mesh-size. Numerical examples are presented to verify its theoretical analysis.


\section*{Acknowledgments}
We thank Zheyu Xia from University of Electronic Science and Technology of China for helpful discussions.
This work is supported in part by the NSF DMS-2012669 (C.~Wang), and Jiangsu Science and Technology Programme-Fundamental Research Plan Fund, Research and Development Fund of XJTLU (RDF-24-01-015) (C.~Xie).

\bibliographystyle{amsplain}
\bibliography{draft2_BDF3}
%
%
%
%

\end{document}